\documentclass[11pt]{amsart}

\usepackage{amsmath,amssymb,amsthm}
\usepackage[margin=1.1in]{geometry}
\usepackage{microtype}
\usepackage{xcolor}
\usepackage{hyperref}
\hypersetup{colorlinks=true,linkcolor=blue!50!black,citecolor=blue!50!black,urlcolor=blue!50!black}

\newtheorem{theorem}{Theorem}[section]
\newtheorem{lemma}[theorem]{Lemma}

\newtheorem{conjecture}[theorem]{Conjecture}
\theoremstyle{definition}
\newtheorem{definition}[theorem]{Definition}
\newtheorem{remark}[theorem]{Remark}

\newcommand{\eqdef}{\mathrel{\mathop:}=}
\newcommand{\bl}{\widetilde{\mathrm{bl}}}
\newcommand{\up}[1]{^{\overline{#1}}}
\newcommand{\R}{\mathcal{R}}

\begin{document}

\title[The two-row case of Kahane's block-statistic conjecture]
{A closed form for the block-statistic generating function of the $2\times n$ rectangle: the two-row case of Kahane's conjecture}

\author{Patrick White}
\address{%
}
\email{p@pwhite.org}

\date{\today}

\begin{abstract}
Kahane (\emph{Combinatorial interpretation of the coefficients of the order polynomial of fence posets}, arXiv:2607.11225) proves that for a fence poset $P$ of size $n$, the order polynomial satisfies $n!\,\Omega(P;t)=\sum_{\sigma\in S_n}t^{\mathrm{bl}(\sigma)}$, where $\mathrm{bl}(\sigma)$ counts the blocks in a canonically defined block decomposition of the labeled poset, and conjectures (Conjecture 4.4) that the same identity holds for the cell poset of \emph{any} skew shape, with $\mathrm{bl}$ replaced by a constructively-defined statistic $\bl$. We prove this conjecture for every $2\times n$ rectangle --- the first genuinely two-dimensional case, where the fence argument no longer applies (blocks are no longer path-intervals, and the relevant statistic is a genuinely correlated function of both rows). The proof introduces, for the $2\times n$ rectangle, a two-parameter \emph{growth-state} generating function $W_n(\alpha,\beta;t)$ refining $\sum_\sigma t^{\bl(\sigma)}$ by the rank $\alpha$ of the bottom-row minimum and the rank $\beta$ of the rightmost top cell's block-root, and establishes a closed form
\[
  W_n(\alpha,\beta;t)=(\alpha-1)!\,B(n,\beta-2)\,S_n(\alpha,\beta;t)
\]
for all $n\ge1$, where $B(n,k)=\tfrac{k+1}{n}\binom{2n-k-2}{n-1}$ is the Catalan triangle and $S_n$ is an explicit rising-factorial skeleton. The formula is proved by induction on $n$ through Kahane's column-insertion transition: the load-bearing structural fact is that the block-root below a newly inserted top cell is always the global bottom-row minimum, which makes $(\alpha,\beta)$ a sufficient statistic for the transition and reduces the induction to a Catalan-triangle computation. Summing the closed form over $(\alpha,\beta)$ telescopes to $C_n\,t\up{n}(t+1)\up{n}$, which by MacMahon's product formula equals $(2n)!\,\Omega(\R_{2,n};t)$, yielding Conjecture 4.4 for $\R_{2,n}$. Every load-bearing step is a complete general-$n$ argument, including the column-transition multiplicity table (proved by a gap-coordinate enumeration); the table and the inductive step are additionally cross-checked by exhaustive enumeration (the table through $n=8$, the inductive step through $n=9$) via an independent from-scratch implementation of $\bl$. General skew shapes and $m\times n$ rectangles with $m>2$ remain open.
\end{abstract}

\maketitle

\section{Introduction}\label{sec:intro}

\subsection{Order polynomials and Kahane's conjecture}\label{ssec:conj}

For a finite poset $P$ of size $n$, its \emph{order polynomial} $\Omega(P;t)$ counts the order-preserving maps $P\to[t]\eqdef\{1,\dots,t\}$; it is a polynomial in $t$ of degree $n$. Kahane \cite{Kahane} studies the following refinement. Given a labeling $\sigma$ of the elements of $P$ by $\{1,\dots,n\}$ (a bijection $\sigma:P\to[n]$), one can associate a canonical partition of $P$ into \emph{blocks} --- connected, convex subsets on which $\sigma$ is monotone in a precise sense (Definition~\ref{def:blockstat}) --- and write $\bl_P(\sigma)$ for the number of blocks. For \emph{fence} posets, Kahane proves
\begin{equation}\label{eq:fence}
  n!\,\Omega(P;t)=\sum_{\sigma\in S_n} t^{\mathrm{bl}(\sigma)}
\end{equation}
(his Proposition 3.4), giving a combinatorial interpretation of the coefficients of the order polynomial of a fence. He then extends the block statistic constructively to the cell poset of an arbitrary skew shape and conjectures that \eqref{eq:fence} continues to hold:

\begin{conjecture}[{\cite[Conjecture 4.4]{Kahane}}]\label{conj:kahane}
Let $P$ be the cell poset of a skew shape of size $n$. Then
\[
  n!\,\Omega(P;t)=\sum_{\sigma\in S_n} t^{\bl_P(\sigma)}.
\]
\end{conjecture}

\noindent Kahane verifies the leading term in general (Remark 4.5 of \cite{Kahane}: the $t^n$ coefficient counts labelings with all blocks singletons, which are precisely the linear extensions of $P$, matching the leading coefficient of $n!\,\Omega(P;t)$), and proves \eqref{eq:fence} for fences and circular fences.

\subsection{Our result}\label{ssec:result}

We prove Conjecture~\ref{conj:kahane} for every $2\times n$ rectangle. Write $\R_{2,n}$ for the cell poset of the (straight) shape $(n,n)$ --- a $2\times n$ grid of cells, which as a poset is the product of a $2$-chain and an $n$-chain (Section~\ref{sec:prelim}). This is the first case where the geometry is genuinely two-dimensional: already for the $2\times 2$ square the blocks are no longer intervals along a single fence, and the statistic $\bl$ is a genuinely correlated function of both rows (the bottom row contributes a left-to-right-minima count $L$, the top row contributes a ``new-root'' count $N$ that depends on the full interleaving of the two rows, and $\bl=L+N$; $L$ and $N$ are \emph{not} independent).

\begin{theorem}[Main theorem]\label{thm:main}
Conjecture~\ref{conj:kahane} holds for $\R_{2,n}$, for every $n\ge1$:
\[
  (2n)!\,\Omega(\R_{2,n};t)=\sum_{\sigma\in S_{2n}} t^{\bl(\sigma)}.
\]
\end{theorem}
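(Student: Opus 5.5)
The plan follows the route sketched in the abstract. I will compute both sides of Theorem~\ref{thm:main} in closed form and check that they agree. For the left side I use MacMahon's product formula for plane partitions with at most two rows and entries bounded by $t$, equivalently the order polynomial of the product of a $2$-chain and an $n$-chain. It gives
\[
\Omega(\R_{2,n};t)=\prod_{i=1}^{2}\prod_{j=1}^{n}\frac{t+i+j-2}{i+j-1}.
\]
Rearranging, the hook-type denominator is $(2n)!/C_n$, so
\[
(2n)!\,\Omega(\R_{2,n};t)=C_n\,t\up{n}(t+1)\up{n}.
\]
It therefore suffices to show that $\sum_{\sigma}t^{\bl(\sigma)}=C_n\,t\up{n}(t+1)\up{n}$.

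For the right side, I will refine the sum into the growth-state function $W_n(\alpha,\beta;t)$. Here $\alpha$ is the rank of the bottom-row minimum and $\beta$ is the rank of the block-root of the rightmost top cell. I then aim to prove the closed form $W_n=(\alpha-1)!\,B(n,\beta-2)\,S_n(\alpha,\beta;t)$ by induction on $n$.

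\begin{itemize}
\item \textbf{Base case.} This is $n=1$, a direct enumeration over the two labelings of the $2\times1$ column.
\item \textbf{Inductive step.} I pass from $n$ to $n+1$ using Kahane's column-insertion transition. Appending a column inserts two new labels into the relative order. The bottom cell's contribution to $\bl=L+N$ depends only on whether it is a new left-to-right minimum, which is controlled by $\alpha$. The top cell's contribution depends on whether it becomes a new block-root.
\item \textbf{Structural lemma.} The key fact I must establish is that the block-root lying below a newly inserted top cell is always the global bottom-row minimum. From the constructive definition of $\bl$ (Definition~\ref{def:blockstat}), the new top cell is comparable to the new bottom cell, and the new bottom cell's block is rooted at the current bottom minimum. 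Granting this, whether the new top cell starts a new block, and what the new $\beta$ becomes, depend only on $(\alpha,\beta)$ and the insertion positions. So $(\alpha,\beta)$ is a sufficient statistic for the transition.
\item \textbf{Multiplicity table.} Next I tabulate, for each old state $(\alpha,\beta)$ and each new state $(\alpha',\beta')$, how many insertion positions realize the transition and with what power of $t$. I will enumerate in gap coordinates, recording the positions of the two inserted labels relative to the old labels ranked $\alpha$ and $\beta$.
\item \textbf{Closing the induction.} Substituting the inductive hypothesis, the factorial $(\alpha-1)!$ and the skeleton $S_n$ should absorb the $t$-weights, which are rising-factorial steps $t+k$. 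What remains is a Catalan-triangle recurrence $B(n+1,k)=\sum_{j\ge k-1}B(n,j)$ of ballot type.
\end{itemize}

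Finally, I sum the closed form over $\alpha$ and $\beta$. The $\beta$-sum of $B(n,\beta-2)$ against the $\beta$-dependence of $S_n$, and the $\alpha$-sum of $(\alpha-1)!$ against rising factorials, should each telescope. The result should be $C_n\,t\up{n}(t+1)\up{n}$, which completes the proof.

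The main obstacle is the structural lemma together with the multiplicity table. The block construction for skew shapes is global, and I must show that inserting a column never re-roots earlier blocks and never merges blocks across rows in an unexpected way. I will first try an invariant describing the block of every top cell in terms of prefix minima of the bottom row. I will check the table against brute-force enumeration for small $n$ before attempting the general gap-coordinate count.
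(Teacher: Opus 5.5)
Your proposal is correct and follows essentially the same route as the paper. Both use MacMahon's formula for the left side. Both prove the growth-state closed form $W_n(\alpha,\beta;t)=(\alpha-1)!\,B(n,\beta-2)\,S_n(\alpha,\beta;t)$ by column-insertion induction, using the fact that the block-root below the new top cell is the global bottom minimum and a gap-coordinate multiplicity table. Both finish with an $\alpha$-telescope and the Catalan row sum.

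Your predictions about the algebra are slightly off in three places:
\begin{itemize}
\item Closing the induction needs more than the ballot recurrence. It also needs the quadratic identities \eqref{eq:CT3} and \eqref{eq:CT4} of Lemma~\ref{lem:CT}, because the stay and shift multiplicities are quadratic and the comparison is made coefficient by coefficient against $(t+n)(t+n+1)$.
\item The $\beta$-sum needs no telescope. For each fixed $\beta$ the $\alpha$-sum already equals $t\up{n}(t+1)\up{n}$, so the $\beta$-sum is just $\sum_k B(n,k)=C_n$.
\item The ``no re-rooting'' issue you flag as the main obstacle is immediate. The definition processes each row left to right, so appending a column cannot change earlier blocks.
\end{itemize}
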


Theorem~\ref{thm:main} is obtained as a corollary of a finer result. For a labeling $\sigma$ of $\R_{2,n}$, let $m=\min(\text{bottom row})$ and let $\rho$ be the root of the block containing the rightmost top cell; let $\alpha$ and $\beta$ be the ranks of $m$ and $\rho$ among all $2n$ labels, and define the \emph{growth-state generating function}
\[
  W_n(\alpha,\beta;t)\eqdef\sum_{\substack{\sigma\in S_{2n}\\ \text{state}(\sigma)=(\alpha,\beta)}} t^{\bl(\sigma)}.
\]
We prove a closed form for $W_n$ (Theorem~\ref{thm:closedform}); summing it over $(\alpha,\beta)$ gives $\sum_\sigma t^{\bl(\sigma)}=C_n\,t\up{n}(t+1)\up{n}$, and this equals $(2n)!\,\Omega(\R_{2,n};t)$ by MacMahon's product formula (Section~\ref{sec:conjecture}).

\subsection{Proof overview}\label{ssec:overview}

Section~\ref{sec:prelim} fixes the poset and the block statistic, and records two elementary structural facts: the bottom-row blocks are governed by left-to-right minima, and $\bl=L+N$. Section~\ref{sec:closedform} states the closed form for $W_n$. Section~\ref{sec:proof} proves it by induction on $n$: adding the $(n{+}1)$-st column drives a transition $(\alpha,\beta)\to(\alpha',\beta')$ whose increment in $\bl$ is $dL+dN$, and the central structural fact (Lemma~\ref{lem:xminus}) --- that the block-root below the newly inserted top cell is always the global bottom-row minimum --- makes $(\alpha,\beta)$ a sufficient statistic for this transition. The induction then reduces to counting the transition multiplicities (Section~\ref{ssec:multiplicity}, derived by a gap model and certified exhaustively) and to four Catalan-triangle identities plus one rising-factorial identity (Sections~\ref{ssec:catalan}--\ref{ssec:rf}), after which the diagonal and off-diagonal inductive steps are polynomial algebra (Section~\ref{ssec:induction}). Section~\ref{sec:conjecture} sums the closed form to deduce Theorem~\ref{thm:main}. Section~\ref{sec:verification} gives an honest accounting of the computational verification, and Section~\ref{sec:methodology} records the methodology and attribution.

\section{Preliminaries}\label{sec:prelim}

\subsection{The $2\times n$ rectangle poset}\label{ssec:poset}

The \emph{$2\times n$ rectangle} $\R_{2,n}$ has cells $(i,j)$ with $i\in\{1,2\}$ (row, $1$ = top) and $j\in\{1,\dots,n\}$ (column), with covering relations $(i,j)\gtrdot(i+1,j)$ (the cell directly below) and $(i,j)\gtrdot(i,j+1)$ (the cell directly to the right) whenever both cells are present; the top-left cell $(1,1)$ is the unique maximum. As a poset, $\R_{2,n}$ is (the reverse of) the product of a $2$-chain and an $n$-chain. A \emph{labeling} is a bijection $\sigma:\R_{2,n}\to\{1,\dots,2n\}$; we write $b_j=\sigma(2,j)$ for the bottom-row values (left to right) and $a_j=\sigma(1,j)$ for the top-row values.

\subsection{The block statistic}\label{ssec:blockstat}

We use Kahane's constructive definition of $\bl$ \cite[Lemma 4.3]{Kahane}, which (for our purposes) defines $\bl(\sigma)$ as the number of blocks produced by the following deterministic row-by-row procedure, and which Kahane shows agrees with the intrinsic Definition 4.2 of \cite{Kahane} on the shapes for which the latter is well-defined.

\begin{definition}\label{def:blockstat}
Build the block partition of a labeled $\R_{2,n}$ in two stages.
\begin{itemize}
\item \textbf{Bottom row (fence step).} Scan $b_1,\dots,b_n$ left to right. Start a block at $b_1$; a block rooted at $b_r$ greedily absorbs $b_{r+1},b_{r+2},\dots$ so long as the values keep exceeding the root $b_r$, and a value $\le b_r$ starts a new block. The block roots are exactly the \emph{left-to-right running minima} of $(b_1,\dots,b_n)$: positions $j$ with $b_j<\min(b_1,\dots,b_{j-1})$ (and $j=1$). Let $L(\sigma)$ be the number of such minima, i.e.\ the number of bottom-row blocks.
\item \textbf{Top row (insertion step).} Insert $a_1,\dots,a_n$ left to right. When inserting $a_j$, let $x_-$ be the root of the block containing its left child $b_j$ (the bottom cell in the same column), and let $x_+$ be the root of the block containing the previously inserted top cell $a_{j-1}$ (with $x_+=+\infty$ for $j=1$). One has $\sigma(x_-)\le\sigma(x_+)$ always, and $a_j$ is placed by the four-case rule:
\[
\begin{array}{ll}
\sigma(x_-)=\sigma(x_+): & a_j\text{ joins that shared block};\\
\sigma(x_-)<\sigma(a_j)<\sigma(x_+): & a_j\text{ starts a new block (rooted at }a_j\text{)};\\
\sigma(x_-)<\sigma(x_+)<\sigma(a_j): & a_j\text{ joins the block of }x_+;\\
\sigma(a_j)<\sigma(x_-): & a_j\text{ joins the block of }x_-.
\end{array}
\]
Let $N(\sigma)$ be the number of new blocks created in this step (the number of times the second case fires).
\end{itemize}
The total number of blocks is $\bl(\sigma)=L(\sigma)+N(\sigma)$.
\end{definition}

\begin{remark}\label{rem:reconstructed}
The fourth case above is our reconstruction: the proof text of \cite[Lemma 4.3]{Kahane} (a recent preprint) lists three cases, the third twice verbatim, which reads as a transcription artifact; the missing case is the natural mirror $\sigma(a_j)<\sigma(x_-)\Rightarrow a_j$ joins the block of $x_-$. We did not confirm this reading with the author, but the worked example accompanying \cite[Figure 12]{Kahane} exercises precisely the missing case and matches the reconstruction: the narration there has $\sigma(x_-)=7$ and $\sigma(x_+)=8$ and concludes that the inserted cell joins the block of $x_-$ --- an outcome none of the three printed cases can produce (they yield, respectively, a shared block when $\sigma(x_-)=\sigma(x_+)$, a new root, or the block of $x_+$), and exactly what the reconstructed fourth case prescribes. It is further validated indirectly: the resulting statistic satisfies Conjecture~\ref{conj:kahane} on every rectangle we test (Sections~\ref{sec:verification}--\ref{sec:conjecture}), and its leading coefficient matches the linear-extension count for every such rectangle (Kahane's Remark 4.5, an author-proven fact independent of the case split). The first three cases are quoted from \cite{Kahane} directly.
\end{remark}

\begin{remark}\label{rem:prop41}
Kahane's Proposition 4.1(1) asserts that every connected skew-shape poset has a \emph{unique} ``leftmost element.'' This is false as stated for any shape with a strictly narrowing row (e.g.\ the hooks and staircases): the worked shape $431/1$ in \cite[Figure 9]{Kahane} already has three elements with neither a left nor a right child. The falsehood is harmless for \emph{computing} $\bl$ (Definition~\ref{def:blockstat} never invokes the uniqueness claim), but a from-scratch proof of Conjecture~\ref{conj:kahane} for general shapes will need to patch or route around it. Our rectangle proof routes around it entirely.
\end{remark}

\subsection{Two structural facts}\label{ssec:struct}

\begin{lemma}\label{lem:lastblock}
In the bottom-row fence of $b_1,\dots,b_n$, the last block (the one containing $b_n$) is rooted at the global minimum $m=\min(b_1,\dots,b_n)$, and contains every $b_j$ to the right of $m$.
\end{lemma}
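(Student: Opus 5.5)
The plan is to read everything directly off the fence step of Definition~\ref{def:blockstat}: block roots are exactly the left-to-right running minima, and each block is a contiguous interval of positions. First I will check the interval claim. The rule is greedy: a block rooted at $b_r$ absorbs $b_{r+1},b_{r+2},\dots$ until the first value $\le b_r$ appears, and that value opens the next block. So the blocks partition $\{1,\dots,n\}$ into consecutive intervals. Each interval runs from one running-minimum position up to, but not including, the next running-minimum position.

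Next let $p$ be the position with $b_p=m$. Since labels are distinct, $b_p<b_j$ for every $j\ne p$. In particular $b_p<\min(b_1,\dots,b_{p-1})$, so $p$ is a running minimum and hence a block root. For every $j>p$ we have $b_j>m\ge\min(b_1,\dots,b_{j-1})$, so no position after $p$ is a running minimum.

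It follows that the block rooted at $p$ is the final interval $\{p,p+1,\dots,n\}$. Equivalently, the greedy absorption from root $b_p$ never stops, because every later value exceeds $m=b_p$. This block therefore contains $b_n$ and every $b_j$ to the right of $m$, which is the statement. The only point needing care, and it is minor, is that greedy absorption compares against the root value $b_r$, not the previous entry. That is exactly why the stopping condition is ``value $\le$ root,'' and why the roots coincide with the running minima. Once this identification is stated explicitly, nothing further is needed.
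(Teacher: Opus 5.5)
Your proposal is correct and follows the paper's proof: both identify the block roots with the left-to-right minima, observe that the global minimum is the last such minimum, and conclude that greedy absorption from $m$ never stops, so its block runs to $b_n$. Your added remark that blocks are contiguous intervals, because absorption compares against the root rather than the previous entry, only makes explicit what the paper's one-line version leaves implicit.
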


\begin{proof}
The block roots are the left-to-right minima. The last left-to-right minimum is the global minimum $m$: the global minimum is smaller than everything before it (hence a left-to-right minimum), and nothing after it can be a left-to-right minimum (nothing is smaller than the global minimum). Every cell to the right of $m$ has value $>m$, so the greedy rule absorbs all of them into $m$'s block, which therefore runs to the end of the row and contains $b_n$.
\end{proof}

\begin{lemma}\label{lem:xminus}
When inserting the top cell $a_j$, the root $x_-$ of the block containing its left child $b_j$ is the global bottom-row minimum $m$ --- \emph{provided} we insert the top row of a rectangle whose bottom row has already been fenced. In particular, for the rightmost top cell $a_n$, $\sigma(x_-)=\sigma(m)$, the rank of the bottom-row minimum.
\end{lemma}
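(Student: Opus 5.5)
The plan is to reduce the statement to Lemma~\ref{lem:lastblock}, applied not to the full bottom row but to the bottom row of the rectangle as it stands at the moment $a_j$ is inserted. In Kahane's column-insertion order, which is the order the induction of Section~\ref{sec:proof} follows, the top cell $a_j$ is inserted when the current shape is $\R_{2,j}$. At that point the bottom row $b_1,\dots,b_j$ has been fenced and $b_j$ is its \emph{rightmost} cell. So the phrase ``global bottom-row minimum'' in the statement is to be read relative to the rectangle being grown, $m_j=\min(b_1,\dots,b_j)$. It coincides with $m$ when $j=n$, which is the case recorded in the last sentence of the statement.

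\textbf{Step 1 (prefix stability of the fence).} I first check that the bottom-row block partition of $\R_{2,n}$, restricted to columns $1,\dots,j$, is exactly the fence partition of $b_1,\dots,b_j$ on its own. This holds because the greedy rule of Definition~\ref{def:blockstat} scans left to right. Whether $b_i$ starts a new block or joins the current one depends only on $b_1,\dots,b_i$, since the roots are the left-to-right minima, a prefix-determined property. Hence fencing the full row and then looking at the first $j$ columns gives the same blocks and roots as fencing the $2\times j$ prefix.

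\textbf{Step 2 (top insertions do not move bottom roots).} Next I verify that the insertions of $a_1,\dots,a_{j-1}$ never change the root of the block containing a bottom cell. Inspecting the four cases, an inserted top cell either joins an existing block, keeping that block's root, or starts a new block rooted at itself. In no case are two existing blocks merged or re-rooted. So when $a_j$ is inserted, the block of $b_j$ still has the root assigned by the fence step.

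\textbf{Step 3 (apply Lemma~\ref{lem:lastblock}).} By Lemma~\ref{lem:lastblock} applied to $b_1,\dots,b_j$, the block containing the last cell $b_j$ is rooted at $m_j=\min(b_1,\dots,b_j)$. Combining this with Steps 1--2 gives $x_-=m_j$. For $j=n$ this is the bottom-row minimum $m$, so $\sigma(x_-)=\sigma(m)$, and its rank among all $2n$ labels is $\alpha$.

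The main obstacle is not computational but interpretive. If $b_j$ is read as a cell of the full $2\times n$ row with $j<n$, then $b_j$ can lie in an earlier block whose root is a non-global left-to-right minimum, and the claim fails. The proof therefore has to make explicit the proviso in the statement: insertion of $a_j$ is performed in the grown rectangle $\R_{2,j}$, where $b_j$ is the last bottom cell. That is exactly the situation used in the column-insertion transition $(\alpha,\beta)\to(\alpha',\beta')$. I would state this reading at the start of the proof and note that only the $j=n$ instance, that is the newly added column, is used downstream.
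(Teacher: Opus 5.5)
Your proposal is correct and is essentially the paper's argument: the case used downstream, $j=n$, is Lemma~\ref{lem:lastblock} applied directly, and the paper likewise concedes that for general $j$ the literal claim (with $m$ the full-row minimum) holds only when column $j$ is at or to the right of $m$. Your Steps 1--3 give the slightly sharper form $x_-=\min(b_1,\dots,b_j)$, and your Step 2 makes explicit that top insertions never re-root a bottom block, which the paper leaves implicit. (By your own Step 1 this form holds whether $b_j$ is viewed in $\R_{2,j}$ or in the full row, so the only real interpretive choice is which minimum ``global'' refers to.)
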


\begin{proof}
By Lemma~\ref{lem:lastblock}, $b_n$ lies in the block rooted at $m$. For $a_n$ this gives $x_-=m$ directly. For a general $a_j$ the statement as written needs the bottom row to be a single fence in which $b_j$ sits in $m$'s block, which holds for $j$ to the right of $m$; in the column-insertion argument of Section~\ref{sec:proof} we use only the $a_n$ case (the newly inserted top cell is always the new rightmost one), and there $x_-=m$ is exactly Lemma~\ref{lem:lastblock}.
\end{proof}

\subsection{The growth state}\label{ssec:state}

For a labeling $\sigma$ of $\R_{2,n}$, let $m=\min(b_1,\dots,b_n)$ and let $\rho$ be the root of the block containing the rightmost top cell $a_n$. Let
\[
  \alpha\eqdef \mathrm{rank}(m),\qquad \beta\eqdef\mathrm{rank}(\rho)
\]
among all $2n$ labels. The pair $(\alpha,\beta)$ is the \emph{growth state} of $\sigma$.

\begin{lemma}\label{lem:support}
The admissible growth states are exactly $1\le\alpha\le\beta\le n+1$ with $\beta\ge2$.
\end{lemma}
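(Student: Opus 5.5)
Two things have to be shown: every labeling has a state in the stated region (necessity), and every pair in the region is realized by some labeling (sufficiency).

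\textbf{Step 1: $\alpha\le\beta$.} The root $\rho$ of the block containing $a_n$ is either a bottom-row left-to-right minimum or a top cell that opened a new block. In the first case $\sigma(\rho)\ge m$ trivially. In the second case $\rho=a_j$ for some $j$, and $a_j$ opened a block through the second case of Definition~\ref{def:blockstat}. That case requires $\sigma(a_j)>\sigma(x_-)$, where $x_-$ is a bottom-row root, and every bottom-row root has value at least $m$. Hence $\sigma(\rho)\ge m$ always, so $\alpha\le\beta$.

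\textbf{Step 2: $\beta\ge2$.} The label $1$ is either in the bottom row or in the top row. If it is in the bottom row, then $m=1$. The cell carrying $1$ cannot be $\rho$ unless $\rho=m$, and $\rho=m$ would force $a_n$ into $m$'s block. I will check that this does happen: $a_n$ joins $m$'s block exactly when $x_+=m$, or when $\sigma(a_n)<\sigma(m)=1$, which is impossible. Then I will argue that a top-row block can never be rooted at a bottom cell except through the ``shared'' or ``joins $x_-$'' cases. This is awkward, so I would instead argue directly that the state $\beta=1$ requires $\rho$ to be the cell labeled $1$.

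If that cell is a top cell $a_j$, it could not have started a new block, because that would need $\sigma(a_j)>\sigma(x_-)\ge1$. So $\rho$ is a bottom cell with label $1$, i.e.\ $\rho=m$. Then $a_n$ lies in $m$'s block. I claim this is impossible, because the top cell above the minimum column always gets a top-row root or $x_+$ strictly above $m$. Chasing this claim is where I expect subtlety, and it will likely need an induction over $j$ tracking $\sigma(x_+)$.

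If instead the $\beta=1$ route is excluded, the cleaner argument should be this. Any top cell joining a bottom block does so via the ``shared'' case or the ``$a_j<x_-$'' case. I would check small cases to see whether $\beta=1$ genuinely never occurs, e.g.\ for $n=1$: labels $(a_1,b_1)$ with $a_1$ above $b_1$. If $b_1=1$, then $x_-=b_1$ and $x_+=\infty$, so $a_1=2>1$ starts a new block, giving $\beta=2$. This is the mechanism I would generalize: a new block is forced whenever $x_-<a_j<x_+$, and since $x_-\ge m$, a block rooted at $m$ can only be joined by top cells smaller than $m$.

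\textbf{Step 3: $\beta\le n+1$.} I expect $\rho$ to be either $m$ or a top cell. The top cells $a_1>a_2>\dots>a_n$ are not monotone in general, but $a_n$ is the minimum of its top row only among $a_1,\dots,a_n$ through the poset order; labels need not respect the poset. So the bound must come from the rule itself. When $a_j$ starts a new block, $\sigma(a_j)<\sigma(x_+)$, so the root values along the top row are decreasing whenever a new root appears, and joining never increases the root. Hence $\sigma(\rho)$ is at most the value of the first top root. Moreover, each successive new root is smaller than the previous one and larger than $m$. For the rank bound I would count labels smaller than $\rho$ and show there are at most $n$. This bound is the main obstacle and likely needs its own induction on $n$ via column insertion.

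\textbf{Step 4: sufficiency.} For each $(\alpha,\beta)$ in the region I will exhibit an explicit labeling realizing it, chosen by direct construction of the bottom row with minimum rank $\alpha$ and a suitable $a_n$.
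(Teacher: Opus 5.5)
Your Step~1 is correct. It is also a valid alternative to the paper's argument: every block root is either a bottom-row left-to-right minimum or a top cell that exceeded its $x_-$, so every root has value at least $\sigma(m)$. The paper instead checks the four cases at $a_n$, where $x_-=m$. The other three parts, however, are not proved.

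In Step~2 your reduction of $\beta=1$ to ``$\rho=m$ and $\sigma(m)=1$'' is right, but the decisive induction is left undone. Your ``cleaner'' claim, that a block rooted at $m$ can only be joined by top cells smaller than $m$, is false as stated: cases one and three put $a_j$ into $m$'s block whenever $a_{j-1}$ is already there, whatever $\sigma(a_j)$ is. What is true, and what the paper uses, is that the \emph{first} top cell to enter $m$'s block must do so by case four with $x_-=m$. Cases one and three need $a_{j-1}$ already inside, and cannot fire at $j=1$ since $x_+=+\infty$. Case four requires $\sigma(a_j)<\sigma(m)=1$, which is impossible.

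Step~3 is the main gap: there is no argument, and your working expectation that $\rho$ is $m$ or a top cell is wrong. For $n=2$ and $(b_1,b_2,a_1,a_2)=(3,1,2,4)$, $a_1$ joins $b_1$'s block by case four and $a_2$ joins it by case three. So $\rho=b_1$ is a non-global bottom minimum, with $\beta=3=n+1$.

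The monotonicity you noticed does finish the job once paired with one more observation. Let $r_j$ be the value of the root of $a_j$'s block. The four cases give $r_j\le r_{j-1}$, using $\sigma(x_-)\le\sigma(x_+)$. They also show that column $j$ contains a label $\ge r_j$: in cases two and three this is $a_j$, and in cases one and four it is $b_j$, which lies in the bottom block rooted at $x_-$. So each of the $n$ columns carries a label $\ge r_j\ge r_n=\sigma(\rho)$. Hence at most $n$ labels lie below $\rho$, and $\beta\le n+1$. The paper gets this bound differently, by carrying admissibility through the column-transition induction: every target in Lemma~\ref{lem:multiplicity} of an admissible source is admissible. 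The direct argument makes the lemma independent of that machinery.

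Step~4 is likewise only a promise; the paper deduces realizability from positivity of the closed form. A direct construction is easy, though.
\begin{itemize}
\item For $2\le\alpha\le n+1$: give $a_n$ rank $1$ and $b_n$ rank $\alpha$, put ranks $2,\dots,\alpha-1$ on $a_1,\dots,a_{n-1}$, and put everything else higher. Then case four at $a_n$ gives $\beta=\alpha$.
\item For $\alpha<\beta\le n+1$: give $b_n$ rank $\alpha$ and $a_n$ rank $\beta$, put the remaining $\beta-2$ ranks below $\beta$ on $a_1,\dots,a_{n-1}$, and put $b_1,\dots,b_{n-1}$ above $\beta$. Then $\sigma(x_+)\ge\min(b_1,\dots,b_{n-1})>\sigma(a_n)$ (or $x_+=+\infty$ when $n=1$), and case two gives $\rho=a_n$.
\end{itemize}
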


\begin{proof}
\emph{$\alpha\le\beta$.} At the insertion of the rightmost top cell $a_n$ one has $x_-=m$ (Lemma~\ref{lem:xminus}), and in each of the four cases of Definition~\ref{def:blockstat} the root $\rho$ of the block receiving $a_n$ satisfies $\sigma(\rho)\ge\sigma(m)$: cases one and four give $\rho=x_-=m$; case two gives $\sigma(\rho)=\sigma(a_n)>\sigma(x_-)=\sigma(m)$; case three gives $\sigma(\rho)=\sigma(x_+)>\sigma(x_-)=\sigma(m)$. Hence $\alpha\le\beta$.

\emph{$\beta\ge2$.} If $\alpha\ge2$ this follows from $\alpha\le\beta$, so assume $\alpha=1$, i.e.\ $m$ is the globally smallest label. Then no top cell ever lies in the block rooted at $m$, so $\rho\ne m$ and $\beta\ge2$. Indeed, by induction on $j$: $a_j$ could enter the $m$-rooted block only through case four with $x_-=m$, which requires $\sigma(a_j)<\sigma(m)$ --- impossible when $\sigma(m)$ is globally minimal --- or through case one or three with the root of $x_+$'s block equal to $m$, which requires $a_{j-1}$ to already lie in the $m$-rooted block, contradicting the inductive hypothesis (and for $j=1$, $x_+=+\infty$, so cases one and three cannot fire).

\emph{$\beta\le n+1$, and every stated cell occurs.} This is established in the course of the induction of Section~\ref{sec:proof}: the base case $n=1$ has states $(1,2)$ and $(2,2)$ only (Section~\ref{ssec:induction}); inductively, every labeling of $\R_{2,n+1}$ arises via \eqref{eq:bij} from a labeling of $\R_{2,n}$, whose state is admissible at width $n$ by hypothesis, and every target of the multiplicity table (Lemma~\ref{lem:multiplicity}) is admissible at width $n{+}1$ --- by inspection of the nine families, $\alpha'\le\beta'\le\beta+1\le n+2$ and $\beta'\ge2$ throughout. Conversely, each admissible cell carries nonzero mass, since the closed form of Theorem~\ref{thm:closedform} is a polynomial with positive leading coefficient there. The support was additionally confirmed cell-by-cell by enumeration for $n\le5$ (Section~\ref{sec:verification}).
\end{proof}

Define
\begin{equation}\label{eq:Wdef}
  W_n(\alpha,\beta;t)\eqdef\sum_{\substack{\sigma\in S_{2n}\\ \text{state}(\sigma)=(\alpha,\beta)}} t^{\bl(\sigma)}.
\end{equation}
Then $\sum_{\alpha,\beta}W_n(\alpha,\beta;t)=\sum_{\sigma\in S_{2n}}t^{\bl(\sigma)}$, the quantity in Conjecture~\ref{conj:kahane} for $\R_{2,n}$.

\section{The closed form}\label{sec:closedform}

Write $x\up{k}=x(x+1)\cdots(x+k-1)$ for the rising factorial ($x\up{0}=1$), and let
\begin{equation}\label{eq:B}
  B(n,k)\eqdef\frac{k+1}{n}\binom{2n-k-2}{n-1}\qquad(0\le k\le n-1)
\end{equation}
be the Catalan triangle ($B(n,k)=0$ outside $0\le k\le n-1$); its rows begin $1\mid 1,1\mid 2,2,1\mid 5,5,3,1\mid 14,14,9,4,1\mid\cdots$, and $\sum_{k=0}^{n-1}B(n,k)=C_n=\frac1{n+1}\binom{2n}{n}$ is the $n$-th Catalan number. Define the \emph{skeleton}
\begin{equation}\label{eq:S}
  S_n(\alpha,\alpha;t)\eqdef t\up{n}\,(\,t+\alpha)\up{\,n-\alpha+1},\qquad
  S_n(\alpha,\beta;t)\eqdef t\cdot t\up{n}\,(t+\alpha+1)\up{\,n-\alpha}\quad(\alpha<\beta).
\end{equation}

\begin{theorem}[Closed form for the growth-state generating function]\label{thm:closedform}
For every $n\ge1$ and every admissible growth state $(\alpha,\beta)$,
\[
  W_n(\alpha,\beta;t)=(\alpha-1)!\,B(n,\beta-2)\,S_n(\alpha,\beta;t).
\]
\end{theorem}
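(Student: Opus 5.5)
We argue by induction on $n$, following the column-insertion scheme sketched in Section~\ref{ssec:overview}.

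\textbf{Base case.} For $n=1$ there are two labelings of $\R_{2,1}$. If $b_1=1,a_1=2$, then $L=1$ and $a_1$ starts a new block (case two, since $x_+=+\infty$). So $\bl=2$ and the state is $(1,2)$. If $b_1=2,a_1=1$, then case four fires, $\bl=1$, and the state is $(2,2)$. Both values match the formula: $0!\,B(1,0)\,t\cdot t\up{1}(t+2)\up{0}=t^2$ and $1!\,B(1,0)\,t\up{1}(t+2)\up{0}=t$.

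\textbf{Reduction to a state transition.} A labeling of $\R_{2,n+1}$ is determined by three pieces of data: the relative order of its restriction to the first $n$ columns (a labeling $\tau$ of $\R_{2,n}$), together with the two new values $b_{n+1},a_{n+1}$, i.e.\ the gap positions in which they are inserted. This is the bijection \eqref{eq:bij}. The fence and insertion steps process columns left to right, so the block structure of the first $n$ columns is unchanged by the new column.

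Appending $b_{n+1}$ changes $L$ by $dL=1$ exactly when $b_{n+1}<m$, and in that case the new minimum is $b_{n+1}$. When $a_{n+1}$ is inserted, Lemma~\ref{lem:xminus} gives $x_-=m'$, the new bottom minimum, and $x_+$ is the old root $\rho$. The four-case rule therefore depends only on the relative order of $m'$, $\rho$ and $a_{n+1}$. It follows that $dN$ and the new state $(\alpha',\beta')$ are functions of $(\alpha,\beta)$ and the two gap positions alone, after the ranks of $m$ and $\rho$ are shifted by the insertions.

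\textbf{Multiplicity table.} We tabulate, for each $(\alpha,\beta)$ and each target $(\alpha',\beta')$, the number of gap-position pairs producing that transition together with the increment $dL+dN$. We organize the table by cases:
\begin{itemize}
\item whether $\alpha=\beta$;
\item where $b_{n+1}$ falls relative to $m$;
\item where $a_{n+1}$ falls relative to $m'$ and $\rho$.
\end{itemize}
This should yield a finite list of families (the paper mentions nine) whose counts are linear in the gap coordinates. The result is a recursion
\[
W_{n+1}(\alpha',\beta';t)=\sum_{(\alpha,\beta)} c_{(\alpha,\beta)\to(\alpha',\beta')}\,t^{d}\,W_n(\alpha,\beta;t).
\]

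\textbf{Inductive step.} We substitute the closed form into the right-hand side. The factor $(\alpha-1)!$ combines with the multiplicities to give $(\alpha'-1)!$. The sums over $\beta$ of $B(n,\beta-2)$ must reproduce $B(n+1,\beta'-2)$. Here we use the Catalan-triangle recurrence $B(n+1,k)=\sum_{j\ge k-1}B(n,j)$ and weighted variants of it, in which the weight is the number of gaps above or below $\rho$. We then factor out the common rising-factorial skeleton. What remains is a polynomial identity in $t$ and $\alpha$, checked using $(t+\alpha)\up{k+1}=(t+\alpha)(t+\alpha+1)\up{k}$. The diagonal targets $\alpha'=\beta'$ and the off-diagonal targets $\alpha'<\beta'$ are treated separately.

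\textbf{Main obstacle.} The main obstacle is getting the multiplicity table exactly right. The rank shifts of $m$ and $\rho$ under the two insertions interact, particularly when $b_{n+1}$ becomes the new minimum and $a_{n+1}$ lands between $m'$ and $\rho$. Each count must then also collapse to a clean Catalan-triangle identity. We would first derive the table in gap coordinates and verify it against brute force for small $n$ before doing the algebra.
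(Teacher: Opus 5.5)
Your base case and the sufficient-statistic reduction are correct, and your overall route is the paper's: column insertion via \eqref{eq:bij}, Lemma~\ref{lem:xminus}, a gap-coordinate multiplicity table, and Catalan-triangle algebra split into diagonal and off-diagonal targets. But the proposal stops where the substantive work begins. Saying ``this should yield nine families'' is not a derivation, and producing and proving the table of Lemma~\ref{lem:multiplicity} is most of the proof. Your setup would also miscount if carried out literally, because the new column is not determined by the two gap positions. When $\gamma(b)=\gamma(a)$ there are two labelings, and the within-gap order changes the outcome. For a shared gap below $m$, $a<b$ triggers case four (target $(r_b,r_b)$, $d=1$), while $a>b$ creates a new root (target $(r_b,r_b+1)$, $d=2$). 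So you must count ordered rank pairs, i.e.\ $|R|+|R\cap\Delta|$ for a gap region $R$ as in \eqref{eq:gapcount}. This is what produces counts such as $(N+2-\beta)(N+1-\beta)$, rather than $(N+1-\beta)^2$, when both new labels lie above $\rho$. The families must then exhaust $(N+2)(N+1)$ pairs, not $(N+1)^2$.

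Second, the factor $(\alpha-1)!$ does not simply ``combine with the multiplicities to give $(\alpha'-1)!$''. That works for the bump $(\alpha,\beta)\to(\alpha+1,\alpha+1)$, whose count $\alpha(N+1-\alpha)$ supplies the extra factor $\alpha$. It fails in the families where $b_{n+1}$ becomes the new minimum. There a fixed target $\alpha'$ receives mass from a whole range of source values $\alpha$, and these carry different skeletons $S_n(\alpha,\beta;t)$, so there is no common skeleton to factor out before summing. You need a telescoping sum over the source $\alpha$, not only over $\beta$:
\[
\sum_{a=\alpha}^{b}(a-1)!\,S_n(a,b;t)=(\alpha-1)!\,t\up{n}(t+\alpha)\up{n-\alpha+1}
\]
(Lemma~\ref{lem:RF}). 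It follows by iterating $\mathrm{diag}(m)=\mathrm{off}(m)+\mathrm{diag}(m+1)$. Each target then reduces to a common factor times $B(n+1,\beta'-2)(t+n)(t+n+1)$, but only after combining this telescope with the tail sum \eqref{eq:CT2} and the specific weighted identities \eqref{eq:CT3} and \eqref{eq:CT4} that match the lower-order coefficients. Finally, to substitute the closed form into the recurrence you must carry the support statement through the induction. That is, $W_n$ vanishes outside $1\le\alpha\le\beta\le n+1$, $\beta\ge2$ (Lemma~\ref{lem:support}), and every target of the table from an admissible source must be admissible at width $n+1$.
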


The rest of the paper proves Theorem~\ref{thm:closedform} (Section~\ref{sec:proof}) and deduces Theorem~\ref{thm:main} from it (Section~\ref{sec:conjecture}).

\section{Proof of the closed form}\label{sec:proof}

We prove Theorem~\ref{thm:closedform} by induction on $n$. The inductive step relates $W_n$ to $W_{n+1}$ by adding the $(n{+}1)$-st column; Sections~\ref{ssec:transition}--\ref{ssec:multiplicity} set up and count that transition, Sections~\ref{ssec:catalan}--\ref{ssec:rf} record the algebraic identities it requires, and Section~\ref{ssec:induction} carries out the induction.

\subsection{The column transition}\label{ssec:transition}

A labeling of $\R_{2,n+1}$ restricts to a labeling of its first $n$ columns (delete column $n{+}1$ and compress the remaining $2n$ labels to ranks $\{1,\dots,2n\}$); conversely, a labeling of $\R_{2,n}$ together with an ordered choice of two distinct ranks $(r_b,r_a)\in\{1,\dots,2n+2\}^2$, $r_b\ne r_a$, for the new bottom and top labels determines a labeling of $\R_{2,n+1}$ (the old labels occupy the complementary ranks in order). This is a bijection
\begin{equation}\label{eq:bij}
  \{\text{labelings of }\R_{2,n+1}\}\;\longleftrightarrow\;\{\text{labelings of }\R_{2,n}\}\times\{(r_b,r_a):r_b\ne r_a\},
\end{equation}
and $(2n)!\cdot(2n+2)(2n+1)=(2n+2)!$ as required.

Fix a labeling of $\R_{2,n}$ in growth state $(\alpha,\beta)$ and a rank pair $(r_b,r_a)$. Write $N=2n$, and let $\mathrm{emb}(k)$ be the new rank (among $\{1,\dots,N+2\}$) of the old label of old-rank $k$, i.e.\ the $k$-th element of $\{1,\dots,N+2\}\setminus\{r_b,r_a\}$. Set $\widehat m=\mathrm{emb}(\alpha)$ (new rank of the old bottom minimum $m$) and $\widehat\rho=\mathrm{emb}(\beta)$ (new rank of the old root $\rho$).

\begin{lemma}\label{lem:transition}
The growth state $(\alpha',\beta')$ of the resulting labeling of $\R_{2,n+1}$ and the increment $d\eqdef\bl_{n+1}-\bl_n$ are determined by $(\alpha,\beta,r_b,r_a)$ alone, as follows.
\begin{itemize}
\item \textbf{Bottom update.} If $r_b<\widehat m$, then $\alpha'=r_b$ and $dL=1$ (the new bottom cell is a new running minimum); otherwise $\alpha'=\widehat m$ and $dL=0$.
\item \textbf{Top update.} With $x_-=\alpha'$ (Lemma~\ref{lem:xminus}: the block-root below the new top cell is the new global bottom minimum) and $x_+=\widehat\rho$:
\[
\begin{array}{ll}
\alpha'=\widehat\rho: & \beta'=\alpha',\ dN=0;\\
\alpha'<r_a<\widehat\rho: & \beta'=r_a,\ dN=1;\\
\widehat\rho<r_a: & \beta'=\widehat\rho,\ dN=0;\\
r_a<\alpha': & \beta'=\alpha',\ dN=0.
\end{array}
\]
\item The block-count increment is $d=dL+dN$.
\end{itemize}
\end{lemma}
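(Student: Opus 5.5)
The plan is to compare the block procedure of Definition~\ref{def:blockstat} on the labeling $\sigma'$ of $\R_{2,n+1}$ with the procedure on its restriction $\sigma$ to the first $n$ columns. The point is that both stages scan left to right and use only comparisons between labels. Moreover, $\mathrm{emb}$ is the order-preserving identification of the old labels with their new ranks. So everything the procedure does on columns $1,\dots,n$ should be unchanged, and only the new column needs analysis.

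\textbf{Step 1 (bottom row).} The left-to-right minima among $b_1,\dots,b_n$ do not depend on $b_{n+1}$, and their relative order is preserved by $\mathrm{emb}$. Hence the first $L(\sigma)$ roots are the same. The new value $b_{n+1}$, of rank $r_b$, is a further left-to-right minimum exactly when it lies below the old global minimum, i.e.\ when $r_b<\widehat m$. This gives $dL=1$ and $\alpha'=r_b$ in that case, and $dL=0$ and $\alpha'=\widehat m$ otherwise.

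\textbf{Step 2 (old top cells).} For $j\le n$, I must check that the block containing $b_j$ has the same root in $\sigma'$ as in $\sigma$. This is the step that needs care, because the greedy absorption could in principle extend a block. Only the last bottom block can change, and it changes only by possibly absorbing $b_{n+1}$ when $r_b>\widehat m$. Since $j\le n$, the root of $b_j$'s block is unaffected.

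Therefore, inserting $a_1,\dots,a_n$ takes the same case at each step. Each case depends only on the relative order of $a_j$, $x_-$ and $x_+$, all of which are old labels. By induction on $j$, every old top cell lands in the same block with the same root, and no new roots are created. In particular the number of new top blocks among the first $n$ columns is still $N(\sigma)$, and the root of $a_n$'s block is still $\rho$, now of rank $\widehat\rho$.

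\textbf{Step 3 (the new top cell).} By Lemma~\ref{lem:lastblock} applied to $b_1,\dots,b_{n+1}$, the block containing $b_{n+1}$ is rooted at the new global bottom minimum, so $x_-$ has rank $\alpha'$; this is the $a_n$ case of Lemma~\ref{lem:xminus}. The previous top cell is $a_n$, so $x_+=\rho$ with rank $\widehat\rho$. Note that $\alpha'\le\widehat m\le\widehat\rho$, because $\alpha\le\beta$ (Lemma~\ref{lem:support}) and $\mathrm{emb}$ is monotone. Also $r_a$ differs from both $\alpha'$ and $\widehat\rho$, since these are ranks of other cells. So exactly one of the four cases of Definition~\ref{def:blockstat} applies.

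Reading off the receiving root gives $\beta'$ in each case. Only the second case creates a block, which gives the stated $dN$. The first case occurs precisely when $r_b>\widehat m$ and $\alpha=\beta$.

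\textbf{Conclusion.} Steps 1--3 show that $\bl_{n+1}=L(\sigma)+dL+N(\sigma)+dN$, so $d=dL+dN$. Every quantity used depends only on $(\alpha,\beta,r_b,r_a)$.
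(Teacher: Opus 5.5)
Your proposal is correct and follows essentially the same route as the paper. The bottom update comes from the left-to-right-minimum criterion $r_b<\widehat m$. The first $n$ columns keep their block structure because both stages scan left to right and only the last bottom block can absorb $b_{n+1}$. The new top cell is then handled by Lemma~\ref{lem:lastblock}, which gives $x_-$ of rank $\alpha'$ and $x_+=\rho$ of rank $\widehat\rho$. You are somewhat more explicit than the paper, through the induction on $j$ for the old top cells and the check via $\alpha\le\beta$ that $\alpha'\le\widehat\rho$, so exactly one case fires. The underlying argument is the same.
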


\begin{proof}
The bottom update is the definition of a left-to-right minimum: $b_{n+1}$ is a new running minimum iff it is smaller than every existing bottom value, i.e.\ iff $r_b<\widehat m$; in that case the new bottom minimum is $b_{n+1}$ (rank $r_b$), otherwise it stays $m$ (rank $\widehat m$).

For the top update, the new top cell is $a_{n+1}$, the new rightmost top cell, so its insertion is governed by Definition~\ref{def:blockstat}'s four-case rule with $x_-$ the root of the block containing its left child $b_{n+1}$ and $x_+$ the root of the block containing $a_n$. By Lemma~\ref{lem:xminus} (applied to the fenced bottom row of $\R_{2,n+1}$), $x_-$ is the global bottom minimum of that row, whose rank is $\alpha'$ by the bottom update; and $x_+=\rho$ (the block structure on the first $n$ columns is unchanged by appending column $n{+}1$ --- the bottom fence on $b_1,\dots,b_n$ is unaffected by appending $b_{n+1}$ except in its last block, and each $a_j$, $j\le n$, depends only on cells in columns $\le j$), whose new rank is $\widehat\rho$. The four cases then give $\beta'$ and $dN$ as stated, where $dN=1$ exactly when $a_{n+1}$ starts a new block.

Finally $\bl_{n+1}=\bl_n+dL+dN$: appending column $n{+}1$ changes the block count only by the new bottom running minimum ($dL$) and the new top block ($dN$), the block structure on the first $n$ columns being unchanged as just noted.
\end{proof}

Lemma~\ref{lem:transition} is the content of the claim that $(\alpha,\beta)$ is a \emph{sufficient statistic} for the column transition: the transition depends on the labeling only through $(\alpha,\beta)$ and the two new ranks. Consequently $W_{n+1}$ is obtained from $W_n$ by the recurrence
\begin{equation}\label{eq:recurrence}
  W_{n+1}(\alpha',\beta';t)=\sum_{(\alpha,\beta)}\ \sum_{\substack{(r_b,r_a):\,r_b\ne r_a\\ (\alpha,\beta,r_b,r_a)\mapsto(\alpha',\beta',d)}} W_n(\alpha,\beta;t)\,t^{d},
\end{equation}
where the inner sum runs over rank pairs that take state $(\alpha,\beta)$ to $(\alpha',\beta')$ with increment $d$.

\subsection{Transition multiplicities}\label{ssec:multiplicity}

We now prove, in full generality, the multiplicity table that makes the recurrence \eqref{eq:recurrence} explicit. The proof is a clean enumeration in \emph{gap coordinates}.

\paragraph{Gap coordinates.} Fix a source state $(\alpha,\beta)$ at width $n$, and write $N=2n$. For a new label $x\in\{b,a\}$ define its \emph{gap index} $\gamma(x)\eqdef\#\{k:L_k<x\}\in\{0,\dots,N\}$, the number of old labels strictly below it. Partition $\{0,\dots,N\}$ into three regions relative to the old minimum $m=L_\alpha$ and the old root $\rho=L_\beta$:
\[
I\eqdef\{0,\dots,\alpha-1\}\ (\text{below }m),\quad J\eqdef\{\alpha,\dots,\beta-1\}\ (\text{between }m,\rho),\quad K\eqdef\{\beta,\dots,N\}\ (\text{above }\rho),
\]
of sizes $|I|=\alpha$, $|J|=\beta-\alpha$ (empty if $\alpha=\beta$), $|K|=N-\beta+1$. Thus $b<m\iff\gamma(b)<\alpha\iff\gamma(b)\in I$; $a>\rho\iff\gamma(a)\in K$; $m<a<\rho\iff\gamma(a)\in J$.

The ordered rank pair $(r_b,r_a)$ is recovered from $(\gamma(b),\gamma(a))$ together with the within-gap order when the two coincide: if $\gamma(b)\ne\gamma(a)$ the pair $(\gamma(b),\gamma(a))$ determines $(r_b,r_a)$ uniquely (the label in the lower gap has the lower rank), while if $\gamma(b)=\gamma(a)=g$ there are two ordered pairs (the two orders within gap $g$). Consequently, for any region $R\subseteq\{0,\dots,N\}^2$,
\begin{equation}\label{eq:gapcount}
  \#\{(r_b,r_a):(\gamma(b),\gamma(a))\in R\}=|R|+|R\cap\Delta|,
\end{equation}
where $\Delta=\{(g,g):0\le g\le N\}$ is the diagonal (the $|R\cap\Delta|$ term supplies the second ordering at each shared gap). In particular, for two regions $A,B$ with $A\cap B=\varnothing$, the count is $|A||B|$; for $A=B$ it is $|A|^2+|A|=|A|(|A|+1)$.

\paragraph{The transition in gap coordinates.} The embedded new-rank of the old label of old-rank $k$ is $\mathrm{emb}(k)=k+\mathbf 1_{\gamma(b)<k}+\mathbf 1_{\gamma(a)<k}$ (it is pushed up by each new label below it). Writing $\widehat m=\mathrm{emb}(\alpha)$ and $\widehat\rho=\mathrm{emb}(\beta)$, the transition of Lemma~\ref{lem:transition} reads, purely in terms of $(\gamma(b),\gamma(a))$ and the within-gap order:
\begin{equation}\label{eq:gaptransition}
\begin{aligned}
dL&=\mathbf 1_{\gamma(b)\in I}, & \alpha'&=\begin{cases}r_b,&\gamma(b)\in I,\\ \widehat m,&\gamma(b)\notin I,\end{cases}\\
\widehat m&=\alpha+\mathbf 1_{\gamma(b)\in I}+\mathbf 1_{\gamma(a)\in I}, & \widehat\rho&=\beta+\mathbf 1_{\gamma(b)<\beta}+\mathbf 1_{\gamma(a)<\beta},
\end{aligned}
\end{equation}
and then $(\beta',dN)$ is given by the four-case rule of Lemma~\ref{lem:transition} with $x_-=\alpha'$, $x_+=\widehat\rho$. We enumerate the resulting $(\alpha',\beta',d)$, $d=dL+dN$, case by case.

\begin{lemma}[Transition multiplicities]\label{lem:multiplicity}
From a source state $(\alpha,\beta)$ at width $n$ ($N=2n$), the contributions to the recurrence \eqref{eq:recurrence} are as follows. Each line gives a family of source-to-target transitions, the increment $d$, and the number of rank pairs $(r_b,r_a)$ realizing each member of the family.
\begingroup\footnotesize\setlength{\arraycolsep}{1.5pt}
\[
\begin{array}{llll}
\text{family} & \text{condition} & \text{target }(\alpha',\beta'),\ d & \text{count}\\
\hline
\text{(a) stay} & \alpha<\beta,\ \gamma(b),\gamma(a)\in K & (\alpha,\beta),\ 0 & (N+2-\beta)(N+1-\beta)\\
\text{(b) shift} & \alpha<\beta,\ \gamma(b)\in J,\ \gamma(a)\in K & (\alpha,\beta+1),\ 0 & (\beta-\alpha)(N-\beta+1)\\
\text{(c) eq-stay} & \alpha=\beta,\ \gamma(b),\gamma(a)\in K\cup J & (\alpha,\alpha),\ 0 & (N+2-\alpha)(N+1-\alpha)\\
\text{(d) bump} & \gamma(a)\in I,\ \gamma(b)\notin I & (\alpha+1,\alpha+1),\ 0 & \alpha(N+1-\alpha)\\[2pt]
\text{(e) lower-new, top-high} & \gamma(b)\in I,\ \gamma(a)\in K & (\alpha',\beta+1),\ 1,\ \alpha'\in\{1,\dots,\alpha\} & N-\beta+1\text{ each}\\
\text{(f) lower-new, top-low} & \gamma(b),\gamma(a)\in I,\ r_a<r_b & (\alpha',\alpha'),\ 1,\ \alpha'\in\{2,\dots,\alpha+1\} & \alpha'-1\text{ each}\\[2pt]
\text{(g) new-root, interior} & \alpha<\beta,\ \gamma(b){\notin}I,\ \alpha{<}\beta'{\le}\beta & (\alpha,\beta'),\ 1 & N-\alpha+1\text{ each}\\
\text{(h) new-root, edge} & \alpha<\beta & (\alpha,\beta+1),\ 1 & \beta-\alpha\\
\text{(i) double-new} & \alpha'{\le}\alpha,\ \alpha'{<}\beta'{\le}\beta{+}1 & (\alpha',\beta'),\ 2 & 1\text{ each}
\end{array}
\]
\endgroup
\end{lemma}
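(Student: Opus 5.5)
The plan is a direct case enumeration in the gap coordinates of \eqref{eq:gaptransition}, organized first by the region of $\gamma(b)$ and then by that of $\gamma(a)$. Counts come from \eqref{eq:gapcount}, and within-gap order is tracked only where it matters. Since the nine families are supposed to partition all $(N+2)(N+1)$ ordered pairs, the final sanity check will be that the counts sum to $(N+2)(N+1)$, separately for $\alpha<\beta$ and for $\alpha=\beta$.

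\textbf{Case $\gamma(b)\notin I$.} Here $dL=0$ and $\alpha'=\widehat m=\alpha+\mathbf 1_{\gamma(a)\in I}$.
\begin{itemize}
\item If $\gamma(a)\in I$, then $r_a<\alpha'$. This is the fourth case of the top rule, so $\beta'=\alpha'=\alpha+1$ with $d=0$. The count is $|I|\cdot|J\cup K|=\alpha(N+1-\alpha)$, which is family (d).
\item If $\gamma(a)\notin I$, then $\alpha'=\alpha$ and $\widehat\rho=\beta+\mathbf 1_{\gamma(b)\in J}+\mathbf 1_{\gamma(a)\in J}$.
 \begin{itemize}
 \item When $\alpha=\beta$ we have $J=\varnothing$ and $\widehat\rho=\alpha'$. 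The first case of the rule fires, giving (c) with count $|K|(|K|+1)$.
 \item When $\alpha<\beta$ and $\gamma(a)\in K$, we get $r_a>\widehat\rho$ and hence $\beta'=\widehat\rho$. This is $\beta$ or $\beta+1$ according to whether $\gamma(b)\in K$ or $\gamma(b)\in J$, which gives (a) and (b). The diagonal correction applies only to $K\times K$.
 \item When $\alpha<\beta$ and $\gamma(a)\in J$, a new root is created: $\beta'=r_a=\alpha+\gamma(a)-\alpha+1+\mathbf 1[\ldots]$. For each $\beta'$ I will count the pairs with $r_a=\beta'$. Letting $\gamma(b)$ range over $J\cup K$ (with within-gap order deciding ties) should yield $N-\alpha+1$ for each $\beta'\in(\alpha,\beta]$, which is (g). The leftover configurations with $\gamma(b)\in J$ below $\gamma(a)$ push $r_a$ to $\beta+1$, and they should total $\beta-\alpha$; this is (h).
 \end{itemize}
\end{itemize}
The split between (g) and (h) is the delicate bookkeeping. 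I would parametrize by $r_a$ directly rather than by gaps: $r_a$ is determined by $\gamma(a)$ together with whether $b$ lies below $a$.

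\textbf{Case $\gamma(b)\in I$.} Here $dL=1$ and $\alpha'=r_b\le\alpha+1$. Note that $x_-=\alpha'$ is now the rank of $b$ itself.
\begin{itemize}
\item If $\gamma(a)\in I$ with $r_a<r_b$, the fourth case gives $\beta'=\alpha'$ and $d=1$. We have $\alpha'=r_b\in\{2,\dots,\alpha+1\}$, and for fixed $r_b$ there are exactly $r_b-1$ choices of $r_a$ below it. This is (f).
\item If $\gamma(a)\in I$ with $r_a>r_b$, then $r_a<\widehat m<\widehat\rho$, so a new root is created, with $d=2$ and $\beta'=r_a$. These pairs contribute to (i) with $\alpha'<\beta'\le\alpha+1$.
\item If $\gamma(a)\in J$, we again get double-new with $\beta'=r_a\in(\alpha+1,\beta+1]$.
\item If $\gamma(a)\in K$, then $r_a>\widehat\rho=\beta+1$, giving $\beta'=\beta+1$ and $d=1$. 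For each value of $r_b\in\{1,\dots,\alpha\}$ (with $a$ above $m$ there is no tie), the count is $|K|=N-\beta+1$. This is (e).
\end{itemize}
Collecting the last two bullets, the ordered pair $(r_b,r_a)$ is exactly determined by $(\alpha',\beta')$, so each target in (i) has count $1$. This also covers $\alpha=\beta$, where $J=\varnothing$ and the ranges still match.

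\textbf{Main obstacle.} I expect the hard part to be the rank/gap translation at region boundaries: ties within gap $\alpha-1$ or $\beta-1$, the value $r_b=\alpha+1$ in (f) versus $\alpha'\le\alpha$ in (i), and the (g)/(h) split. I will handle these by working with ranks $r_b,r_a$ directly once $\gamma$ fixes the region, and close with the total-count check, plus spot checks against the exhaustive tables for small $n$.
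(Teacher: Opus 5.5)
Your proposal is correct and is essentially the paper's proof: the same gap-coordinate analysis via \eqref{eq:gapcount}, \eqref{eq:gaptransition} and the four-case rule, with (g), (h), (i) counted by fixing ranks directly, organized as a decision tree on the regions of $(\gamma(b),\gamma(a))$ rather than family by family, so disjointness and exhaustion come from the case split instead of the total-count check. One harmless slip: in the bullet $\gamma(b),\gamma(a)\in I$, $r_a>r_b$, the chain should read $r_a<\widehat m\le\widehat\rho$, because $\widehat m=\widehat\rho$ when $\alpha=\beta$; the second case of the rule still fires.
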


In families (g) and (i), $\beta'=r_a$ is the new-rank of $a$ (a new block-root, $dN=1$); in (e) and (f), $\alpha'=r_b$ is the new-rank of $b$ (a new bottom running-minimum, $dL=1$).

\begin{proof}
We take the cases in order, using \eqref{eq:gapcount} and \eqref{eq:gaptransition} throughout.

\emph{(a) Stay} ($\alpha<\beta$, $\gamma(b),\gamma(a)\in K$): both new labels lie above $\rho$. Then $dL=0$ and $\widehat m=\alpha$ (neither new label is below $m$), so $\alpha'=\alpha$; and $a>\rho$ with $\widehat\rho=\beta$ (neither new label below $\rho$), so the four-case rule gives $dN=0$, $\beta'=\widehat\rho=\beta$. The region is $K\times K$: by \eqref{eq:gapcount} with $A=B=K$, the count is $|K|(|K|+1)=(N-\beta+1)(N-\beta+2)$.

\emph{(b) Shift} ($\alpha<\beta$, $\gamma(b)\in J$, $\gamma(a)\in K$): $b$ between $m$ and $\rho$, $a$ above $\rho$. Then $dL=0$ ($\gamma(b)\ge\alpha$) and $\widehat m=\alpha$, so $\alpha'=\alpha$; and $\widehat\rho=\beta+\mathbf 1_{\gamma(b)<\beta}=\beta+1$ (since $\gamma(b)\in J$ means $\gamma(b)<\beta$, while $\gamma(a)\in K$ means $\gamma(a)\ge\beta$). With $a>\rho$, the four-case rule gives $dN=0$, $\beta'=\widehat\rho=\beta+1$. The region is $J\times K$ with $J\cap K=\varnothing$, so the count is $|J||K|=(\beta-\alpha)(N-\beta+1)$.

\emph{(c) Eq-stay} ($\alpha=\beta$, $\gamma(b),\gamma(a)\in K\cup J=\{\alpha,\dots,N\}$): now $m=\rho$, and both new labels lie above $m$. Then $dL=0$, $\widehat m=\alpha$, $\alpha'=\alpha$; and $a>\rho=m$ with $\widehat\rho=\widehat m=\alpha$ gives $dN=0$, $\beta'=\alpha$. The region is $A\times A$ with $A=\{\alpha,\dots,N\}$, $|A|=N-\alpha+1$, so the count is $|A|(|A|+1)=(N-\alpha+1)(N-\alpha+2)$.

\emph{(d) Bump} ($\gamma(a)\in I$, $\gamma(b)\notin I$): $a$ below $m$, $b$ above $m$. Then $dL=0$ and $\widehat m=\alpha+\mathbf 1_{\gamma(a)\in I}=\alpha+1$, so $\alpha'=\alpha+1$. Since $a<m=\widehat m$ lies below the new bottom minimum $x_-=\alpha'$, the fourth case of the four-case rule gives $dN=0$, $\beta'=\alpha'=\alpha+1$. The region is $I\times(\{\alpha,\dots,N\})$ with the two factors disjoint, so the count is $|I|\cdot(N-\alpha+1)=\alpha(N+1-\alpha)$. (For $\alpha=\beta$ this is the ``eq-bump'' of the earlier tabulation; for $\alpha<\beta$ the ``minus'' case; the count and target are identical, so we treat them together.)

\emph{(e) Lower-new, top-high} ($\gamma(b)\in I$, $\gamma(a)\in K$): $b$ below $m$ (so $dL=1$, $\alpha'=r_b$), $a$ above $\rho$. Here $\widehat\rho=\beta+\mathbf 1_{\gamma(b)<\beta}=\beta+1$ (since $\gamma(b)\in I$ means $\gamma(b)<\alpha\le\beta$, while $\gamma(a)\in K$ means $\gamma(a)\ge\beta$); with $a>\rho$, the third case gives $dN=0$, $\beta'=\widehat\rho=\beta+1$, and $d=1$. It remains to read off $\alpha'=r_b$: since $\gamma(a)\ge\beta>\alpha>\gamma(b)$, the label $a$ lies above $b$, so $r_b=\gamma(b)+1$. For a fixed $\alpha'=r_b\in\{1,\dots,\alpha\}$ this forces $\gamma(b)=\alpha'-1\in I$, and then $\gamma(a)$ ranges freely over $K$ ($N-\beta+1$ choices, each a distinct configuration with $a$ in a higher gap than $b$). Hence $N-\beta+1$ pairs for each $\alpha'\in\{1,\dots,\alpha\}$.

\emph{(f) Lower-new, top-low} ($\gamma(b),\gamma(a)\in I$, $r_a<r_b$): both new labels below $m$, with $a$ below $b$. Then $dL=1$ and $\alpha'=r_b$; since $a$ lies below $b=x_-$ (the new bottom minimum), the fourth case gives $dN=0$, $\beta'=\alpha'$, $d=1$. With $a$ below $b$, $r_b=\gamma(b)+2$ (the $\gamma(b)$ old labels below $b$, plus $a$), so $\alpha'=r_b=\gamma(b)+2\in\{2,\dots,\alpha+1\}$. For a fixed $\alpha'$, $\gamma(b)=\alpha'-2$, and $a$ must lie below $b$ and below $m$: it occupies one of the $\gamma(b)$ gaps strictly below $b$'s gap, or the same gap below $b$ --- $\gamma(b)+1=\alpha'-1$ configurations. (The constraint $\gamma(a)\in I$ is automatic, since $\gamma(a)\le\gamma(b)=\alpha'-2\le\alpha-1$.) Hence $\alpha'-1$ pairs for each $\alpha'\in\{2,\dots,\alpha+1\}$.

\emph{(g) New-root, interior} ($\alpha<\beta$, $\gamma(b)\notin I$, target $(\alpha,\beta')$ with $\beta'\in\{\alpha+1,\dots,\beta\}$): here $dL=0$ and, since $\gamma(a)\ge\alpha$ as well, $\widehat m=\alpha$, so $\alpha'=\alpha$. We claim that for each fixed $\beta'\in\{\alpha+1,\dots,\beta\}$, placing $a$ at new-rank $\beta'$ and $b$ at any new-rank in $\{\alpha+1,\dots,N+2\}\setminus\{\beta'\}$ (i.e.\ $b$ above the minimum $m=\widehat m=\alpha$, but not at $a$'s rank) produces exactly the transition $(\alpha,\beta',1)$, and that these are all of them. Indeed, with $a$ at new-rank $\beta'\le\beta$, its gap index satisfies $\gamma(a)\le\beta-1<\beta$, so $\widehat\rho=\beta+\mathbf 1_{\gamma(b)<\beta}+\mathbf 1_{\gamma(a)<\beta}\ge\beta+1>\beta'$; and $\alpha'=\alpha<\beta'=r_a$, so the second case of the four-case rule fires: $dN=1$, $\beta'=r_a$, $d=1$. This holds for \emph{every} placement of $b$ above $m$ (the inequality $\beta'<\widehat\rho$ is independent of $b$'s exact position once $\beta'\le\beta$). The number of such placements of $b$ is $|\{\alpha+1,\dots,N+2\}|-1=(N-\alpha+2)-1=N-\alpha+1$. Conversely, any configuration with $dL=0$, $dN=1$, $\beta'\le\beta$ has $a$ at new-rank $\beta'$ and $b$ above $m$, so is counted. Hence $N-\alpha+1$ pairs for each $\beta'\in\{\alpha+1,\dots,\beta\}$.

\emph{(h) New-root, edge} ($\alpha<\beta$, target $(\alpha,\beta+1)$): as in (g) but with $\beta'=\beta+1$. Now $a$ sits at new-rank $\beta+1$, and the inequality $\beta'<\widehat\rho$ needed for a new root holds precisely when $b$ lies \emph{below} $a$: if $b$ is above $a$ then $\gamma(b)\ge\beta$, so $\widehat\rho=\beta+1=\beta'$ and $a$ is not strictly below $\rho$ (the third case fires, $dN=0$); if $b$ is below $a$ then $\gamma(b)\le\beta-1<\beta$, so $\widehat\rho=\beta+2>\beta'$ and the second case fires ($dN=1$, $\beta'=\beta+1$). The configurations with $dL=0$ (so $b$ above $m$, $r_b\ge\alpha+1$), $b$ below $a$ (so $r_b\le\beta$), are exactly $r_b\in\{\alpha+1,\dots,\beta\}$ --- a count of $\beta-\alpha$.

\emph{(i) Double-new} ($\alpha'\in\{1,\dots,\alpha\}$, $\beta'\in\{\alpha'+1,\dots,\beta+1\}$, target $(\alpha',\beta')$ with $d=2$): both $dL=1$ and $dN=1$. We claim that for each such pair $(\alpha',\beta')$ there is exactly one configuration, namely $b$ at new-rank $\alpha'$ and $a$ at new-rank $\beta'$. With $r_b=\alpha'\le\alpha$ and $r_a=\beta'>\alpha'$: $\gamma(b)=\alpha'-1$ (only old labels below $b$, since $a$ is above $b$), so $\gamma(b)\in I$ and $dL=1$, $\alpha'=r_b$; and $\gamma(a)=\beta'-2$, whence $\widehat\rho=\beta+\mathbf 1_{\gamma(b)<\beta}+\mathbf 1_{\gamma(a)<\beta}=\beta+2$ (both $\gamma(b)=\alpha'-1\le\alpha-1<\beta$ and $\gamma(a)=\beta'-2\le\beta-1<\beta$ contribute). Since $\alpha'=\alpha'<\beta'=r_a<\beta+2=\widehat\rho$, the second case fires: $dN=1$, $\beta'=r_a$, so $d=2$ and the target is $(\alpha',\beta')$. The constraints $\alpha'\le\alpha$ (so $\gamma(b)\in I$) and $\alpha'<\beta'\le\beta+1$ (so $\gamma(a)\le\beta-1$ and $\beta'<\widehat\rho$) are exactly the stated ranges, and the configuration is unique. Hence one pair for each $(\alpha',\beta')$ in the stated range.

\emph{Exhaustion.} The nine families partition the $(N+2)(N+1)$ ordered rank pairs: they are disjoint by construction (each $(r_b,r_a)$ has a definite $(dL,dN)$ and definite regions for $\gamma(b),\gamma(a)$), and a direct sum of the counts gives $(N+2)(N+1)$ --- for $\alpha<\beta$, summing the eight families (a), (b), (d), (e), (f), (g), (h), (i) in order,
\[
\begin{aligned}
&(N{+}2{-}\beta)(N{+}1{-}\beta)+(\beta{-}\alpha)(N{-}\beta{+}1)+\alpha(N{+}1{-}\alpha)+\alpha(N{-}\beta{+}1)\\
&\quad+\tfrac{\alpha(\alpha+1)}2+(\beta{-}\alpha)(N{-}\alpha{+}1)+(\beta{-}\alpha)+\alpha(\beta{+}1)-\tfrac{\alpha(\alpha+1)}2=(N{+}2)(N{+}1),
\end{aligned}
\]
and for $\alpha=\beta$ the surviving families (c), (d), (e), (f), (i) sum to $(N+2)(N+1)$ likewise. We additionally verified this partition, and every individual count above, by exhaustive enumeration of all $(2n+2)(2n+1)$ rank pairs for every source state at widths $n=2,\dots,8$ (154 source states, zero mismatches), using an independent implementation of the transition built from Definition~\ref{def:blockstat} rather than from this table (Section~\ref{sec:verification}).
\end{proof}

\subsection{Catalan-triangle identities}\label{ssec:catalan}

\begin{lemma}\label{lem:CT}
For $B(n,k)$ as in \eqref{eq:B} (with $B(\cdot,-1)=0$):
\begin{align}
B(n+1,j)-B(n+1,j+1)&=B(n,j-1)\qquad\text{(CT1)}\label{eq:CT1}\\
\sum_{k=j}^{n-1}B(n,k)&=B(n+1,j+1)\qquad\text{(CT2)}\label{eq:CT2}\\
(2n-j)(2n-1-j)B(n,j)+(2n-j)(j+1)B(n,j-1)&=n(n+1)B(n+1,j)\qquad\text{(CT3)}\label{eq:CT3}\\
(2n-j)B(n,j-1)+(2n-j)B(n+1,j+1)+(j+1)B(n+1,j)&=(2n+1)B(n+1,j)\qquad\text{(CT4)}\label{eq:CT4}
\end{align}
for the indicated ranges $0\le j\le n-1$.
\end{lemma}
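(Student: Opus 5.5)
The plan is to verify (CT1) and (CT3) directly from the closed formula \eqref{eq:B}, and then obtain (CT2) and (CT4) as formal consequences of (CT1).

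\emph{(CT1).} For $1\le j\le n-1$, all three terms are given by \eqref{eq:B}:
\[
B(n+1,j)=\tfrac{j+1}{n+1}\tbinom{2n-j}{n},\quad
B(n+1,j+1)=\tfrac{j+2}{n+1}\tbinom{2n-j-1}{n},\quad
B(n,j-1)=\tfrac{j}{n}\tbinom{2n-j-1}{n-1}.
\]
I will express all three as rational multiples of the single binomial $\binom{2n-j-1}{n-1}$. This uses $\binom{2n-j}{n}=\frac{2n-j}{n}\binom{2n-j-1}{n-1}$ and $\binom{2n-j-1}{n}=\frac{n-j}{n}\binom{2n-j-1}{n-1}$. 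After this substitution, (CT1) becomes the polynomial identity
\[
(j+1)(2n-j)-(j+2)(n-j)=j(n+1),
\]
which expands correctly.

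The boundary cases are checked separately.
\begin{itemize}
\item At $j=0$, the right side is $B(n,-1)=0$, and the left side vanishes because $B(n+1,0)=B(n+1,1)=C_n$. This is also what the polynomial identity gives at $j=0$.
\item At $j=n-1$, the term $B(n+1,n)=1$ is still inside the range of \eqref{eq:B}, so no special handling is needed.
\end{itemize}
I will apply (CT1) in the extended range $0\le j\le n$. At $j=n$ it reads $B(n+1,n)-B(n+1,n+1)=B(n,n-1)$, i.e.\ $1-0=1$.

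\emph{(CT2).} This follows by telescoping. Applying (CT1) with $j$ replaced by $k+1$ gives $B(n,k)=B(n+1,k+1)-B(n+1,k+2)$. Summing over $k=j,\dots,n-1$ leaves
\[
B(n+1,j+1)-B(n+1,n+1)=B(n+1,j+1),
\]
since $B(n+1,n+1)=0$ lies outside the range of the Catalan triangle. As a sanity check, $j=0$ recovers $\sum_k B(n,k)=C_n=B(n+1,1)$.

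\emph{(CT4).} This is immediate from (CT1), rearranged as $B(n,j-1)+B(n+1,j+1)=B(n+1,j)$. The left side of (CT4) then equals
\[
(2n-j)B(n+1,j)+(j+1)B(n+1,j)=(2n+1)B(n+1,j).
\]

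\emph{(CT3).} This is the one genuine computation, and I expect it to be the main obstacle, mainly for bookkeeping. I will again normalize everything by $\binom{2n-j-1}{n-1}$:
\begin{itemize}
\item $B(n,j)=\frac{j+1}{n}\binom{2n-j-2}{n-1}=\frac{(j+1)(n-j)}{n(2n-j-1)}\binom{2n-j-1}{n-1}$;
\item $B(n,j-1)=\frac{j}{n}\binom{2n-j-1}{n-1}$;
\item $B(n+1,j)=\frac{(j+1)(2n-j)}{n(n+1)}\binom{2n-j-1}{n-1}$.
\end{itemize}
The factor $(2n-1-j)$ cancels against the denominator of $B(n,j)$. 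After dividing by the common factor $(2n-j)(j+1)/n$, (CT3) reduces to
\[
(n-j)+j=n,
\]
which holds trivially.

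For $j=0$ the second term is absent, since $B(n,-1)=0$. The identity then reads $2n(2n-1)C_n\cdot\frac1n\cdot\frac{n}{2n-1}\cdots$, and it agrees with the same reduction with $j=0$ substituted. I will nevertheless double-check it numerically against the rows $1\mid1,1\mid2,2,1\mid5,5,3,1$ for $n=1,2,3$, to catch any off-by-one error in the binomial shifts.
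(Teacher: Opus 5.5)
Your proposal is correct and follows the paper's proof essentially step for step. (CT1) reduces, after normalizing by $\binom{2n-j-1}{n-1}$, to $(j+1)(2n-j)-(j+2)(n-j)=j(n+1)$; (CT2) follows by telescoping; (CT4) follows by rearranging (CT1); and (CT3) uses the same Pascal-ratio normalization. The only change is that you normalize (CT3) by $\binom{2n-j-1}{n-1}$ rather than the paper's $\binom{2n-j-2}{n-1}$, which collapses it neatly to $(n-j)+j=n$.

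Your explicit check of (CT1) at $j=n$, which the telescoping in (CT2) needs, is a point the paper leaves implicit. The garbled $j=0$ aside in (CT3) should simply be dropped: $B(n,-1)=0$ agrees with $\frac{j}{n}\binom{2n-j-1}{n-1}$ at $j=0$, so your uniform reduction already covers that case.
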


\begin{proof}
\eqref{eq:CT1}: write $B(n+1,j)=\frac{j+1}{n+1}\binom{2n-j}{n}$ and $B(n+1,j+1)=\frac{j+2}{n+1}\binom{2n-j-1}{n}$, and use $\binom{2n-j}{n}=\frac{2n-j}{n}\binom{2n-j-1}{n-1}$, $\binom{2n-j-1}{n}=\frac{n-j}{n}\binom{2n-j-1}{n-1}$:
\[
B(n+1,j)-B(n+1,j+1)=\frac{\binom{2n-j-1}{n-1}}{n(n+1)}\bigl[(j+1)(2n-j)-(j+2)(n-j)\bigr]=\frac{j}{n}\binom{2n-j-1}{n-1}=B(n,j-1).
\]
\eqref{eq:CT2}: by \eqref{eq:CT1}, $B(n,k)=B(n+1,k+1)-B(n+1,k+2)$, and the sum telescopes to $B(n+1,j+1)-B(n+1,n+1)=B(n+1,j+1)$.
\eqref{eq:CT3}: multiply through by $n/(j+1)$ (the $j=0$ case is separate, both sides $=2n(2n-1)C_{n-1}=n(n+1)C_n$) and reduce, via Pascal ratios, to $(2n-j)\bigl[(2n-1-j)\binom{2n-j-2}{n-1}+j\binom{2n-j-1}{n-1}\bigr]=n^2\binom{2n-j}{n}$; the bracket equals $\frac{n(2n-1-j)}{n-j}\binom{2n-j-2}{n-1}$ (using $\binom{2n-j-1}{n-1}=\frac{2n-j-1}{n-j}\binom{2n-j-2}{n-1}$ and $(2n-1-j)+\frac{j(2n-j-1)}{n-j}=\frac{(2n-1-j)n}{n-j}$), and the result follows from $\binom{2n-j}{n}=\frac{(2n-j)(2n-j-1)}{n(n-j)}\binom{2n-j-2}{n-1}$.
\eqref{eq:CT4}: rearrange to $(2n-j)\bigl(B(n,j-1)+B(n+1,j+1)-B(n+1,j)\bigr)=0$ and apply \eqref{eq:CT1}.
\end{proof}

\subsection{A rising-factorial identity}\label{ssec:rf}

Write $\varphi_n=t\up{n}$, and for $1\le a\le n$ and $1\le m\le n+1$ set
\[
\mathrm{off}(a)=(a-1)!\,t\,(t+a+1)\cdots(t+n),\qquad \mathrm{diag}(m)=(m-1)!\,(t+m)\cdots(t+n).
\]

\begin{lemma}\label{lem:RF}
For every $m\in\{2,\dots,n+1\}$,
\begin{equation}\label{eq:RF}
  \sum_{a=1}^{m-1}\mathrm{off}(a)+\mathrm{diag}(m)=(t+1)(t+2)\cdots(t+n)\eqdef u.
\end{equation}
Consequently, for $1\le\alpha\le b$,
\begin{equation}\label{eq:RFpartial}
  \sum_{a=\alpha}^{b}(a-1)!\,S_n(a,b;t)=(\alpha-1)!\,\varphi_n\,(t+\alpha)\up{\,n-\alpha+1}.
\end{equation}
\end{lemma}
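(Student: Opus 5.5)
The plan is to prove a one-step telescoping relation between consecutive $\mathrm{diag}$ terms, from which both \eqref{eq:RF} and \eqref{eq:RFpartial} follow by summing. No induction on $n$ is needed.

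\emph{Step 1 (the basic relation).} For $1\le a\le n$ I will show
\[
\mathrm{diag}(a)=\mathrm{off}(a)+\mathrm{diag}(a+1).
\]
Both terms on the right share the factor $(a-1)!\,(t+a+1)\cdots(t+n)$. This product is empty, hence equal to $1$, when $a=n$. Factoring it out, the right-hand side becomes
\[
(a-1)!\,(t+a+1)\cdots(t+n)\,\bigl[t+a\bigr].
\]
This is exactly $(a-1)!\,(t+a)\cdots(t+n)=\mathrm{diag}(a)$.

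\emph{Step 2 (proof of \eqref{eq:RF}).} Fix $m\in\{2,\dots,n+1\}$ and sum the relation of Step 1 over $a=1,\dots,m-1$. All indices stay in $1\le a\le n$, so every $\mathrm{off}(a)$ is defined. The sum telescopes to
\[
\mathrm{diag}(1)=\sum_{a=1}^{m-1}\mathrm{off}(a)+\mathrm{diag}(m).
\]
Since $\mathrm{diag}(1)=0!\,(t+1)\cdots(t+n)=u$, this gives \eqref{eq:RF}. The edge case $m=n+1$ uses $\mathrm{diag}(n+1)=n!$, the empty product times $n!$, and it is consistent with Step 1 at $a=n$.

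\emph{Step 3 (proof of \eqref{eq:RFpartial}).} First I match the summands with the skeleton \eqref{eq:S}, for $b\le n+1$ as in the admissible range.
\begin{itemize}
\item For $a<b$ we have $(a-1)!\,S_n(a,b;t)=(a-1)!\,t\,\varphi_n\,(t+a+1)\up{n-a}=\varphi_n\,\mathrm{off}(a)$.
\item For $a=b$ we have $(b-1)!\,S_n(b,b;t)=\varphi_n\,(b-1)!\,(t+b)\up{n-b+1}=\varphi_n\,\mathrm{diag}(b)$.
\end{itemize}
Hence the left side of \eqref{eq:RFpartial} equals $\varphi_n\bigl[\sum_{a=\alpha}^{b-1}\mathrm{off}(a)+\mathrm{diag}(b)\bigr]$. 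Telescoping Step 1 from $a=\alpha$ up to $b-1$, exactly as in Step 2, turns the bracket into $\mathrm{diag}(\alpha)=(\alpha-1)!\,(t+\alpha)\up{n-\alpha+1}$, which is the claim. The case $\alpha=b$ is immediate, since the sum is empty and only the $\mathrm{diag}$ term remains.

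\emph{Expected obstacle.} The argument has no real obstacle. The only thing needing care is the index bookkeeping: $\mathrm{off}(a)$ must only be used for $a\le n$, which holds because $a\le b-1\le n$. The empty-product conventions at $a=n$ and at $m=n+1$ must also be handled consistently.
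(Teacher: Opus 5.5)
Your proof is correct and is essentially the paper's argument. The paper uses the same factorization $\mathrm{diag}(m)=\mathrm{off}(m)+\mathrm{diag}(m+1)$ as an induction on $m$ from the base $m=2$, and it gets the partial sum by subtracting the $m=\alpha$ instance of the first identity from the full ($\alpha=1$) sum, where you instead telescope directly from $\alpha$ to $b-1$; anchoring at $\mathrm{diag}(1)=u$ and stating $b\le n+1$ explicitly makes your version slightly cleaner at the edge cases $\alpha=1$ and $\alpha=b$.
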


\begin{proof}
The base $m=2$ reads $\mathrm{off}(1)+\mathrm{diag}(2)=t(t+2)\cdots(t+n)+(t+2)\cdots(t+n)=(t+1)(t+2)\cdots(t+n)=u$. The inductive step uses the elementary factorization $\mathrm{diag}(m)=\mathrm{off}(m)+\mathrm{diag}(m+1)$: indeed
\begin{align*}
\mathrm{off}(m)+\mathrm{diag}(m+1)&=(m-1)!\,t\,(t+m+1)\cdots(t+n)+m!\,(t+m+1)\cdots(t+n)\\
&=(m-1)!\,(t+m)(t+m+1)\cdots(t+n)=\mathrm{diag}(m).
\end{align*} For \eqref{eq:RFpartial}: by \eqref{eq:RF} and the definitions, $\sum_{a=1}^{b}(a-1)!S_n(a,b)=\varphi_n\,u$ (the full $\alpha$-sum, the case $\alpha=1$) and $\sum_{a=1}^{\alpha-1}\mathrm{off}(a)=u-\mathrm{diag}(\alpha)$, so subtracting gives $\sum_{a=\alpha}^{b}(a-1)!S_n(a,b)=\varphi_n\,\mathrm{diag}(\alpha)=(\alpha-1)!\,\varphi_n\,(t+\alpha)\up{\,n-\alpha+1}$.
\end{proof}

\subsection{The inductive step}\label{ssec:induction}

\begin{proof}[Proof of Theorem~\ref{thm:closedform}]
\emph{Base case $n=1$.} The two labelings of $\R_{2,1}$ are $(b_1,a_1)=(1,2)$, with $L=1,N=1,\bl=2$, state $(\alpha,\beta)=(1,2)$, and $(2,1)$, with $L=1,N=0,\bl=1$, state $(2,2)$. The formula gives $B(1,0)=1$, $W_1(1,2)=0!\cdot B(1,0)\cdot t\cdot t\up{1}=t^2$ and $W_1(2,2)=1!\cdot B(1,0)\cdot t\up{1}\cdot(t+2)\up{0}=t$, matching.

\emph{Inductive step.} Assume the formula at width $n$, together with the support statement at width $n$: $W_n$ vanishes outside the admissible range (Lemma~\ref{lem:support}), so the recurrence \eqref{eq:recurrence} sums over admissible sources only; and since every target of Lemma~\ref{lem:multiplicity} from an admissible source is admissible at width $n{+}1$, the support statement propagates to width $n{+}1$ as well. We verify the formula at width $n{+}1$ by checking, for every target $(\alpha',\beta')$, that the right side of \eqref{eq:recurrence} (with $W_n$ replaced by the closed form and the inner sums evaluated via Lemma~\ref{lem:multiplicity}) equals $(\alpha'-1)!\,B(n+1,\beta'-2)\,S_{n+1}(\alpha',\beta';t)$. The key simplification throughout is that $S_{n+1}(\alpha,\beta;t)=S_n(\alpha,\beta;t)\cdot(t+n)(t+n+1)$ off the diagonal (and analogously on it), so each target's expected value is the width-$n$ ``factor'' times $B(n+1,\beta'-2)(t+n)(t+n+1)$. We treat diagonal and off-diagonal targets separately; both reduce the resulting coefficient comparison to the identities of Lemma~\ref{lem:CT} and Lemma~\ref{lem:RF}.

\textbf{Diagonal targets $(m,m)$, $2\le m\le n+2$.} For the corner $m=n+2$, the only source is $(n+1,n+1)$ with $W_n(n+1,n+1)=n!\,t\up{n}$; the eq-bump transition (count $(n+1)n$, $d=0$) and the minus-$dL{=}1$ transition (count $n+1$, $d=1$) give $n!\,t\up{n}(n+1)(n+t)=(n+1)!\,t\up{n+1}=W_{n+1}(n+2,n+2)$. For $2\le m\le n+1$, divide the contributions by the factor $\mathrm{fac}=(m-1)!\,\varphi_n\,(t+m)\up{\,n-m+1}$ so the target is $\mathrm{fac}\cdot B(n+1,m-2)(t+n)(t+n+1)$; the four contributing source families (eq-stay from $(m,m)$, eq-bump from $(m-1,m-1)$, minus-$dL{=}0$ summed over $(m-1,b)$ with $b>m-1$ via \eqref{eq:CT2}, and minus-$dL{=}1$ summed via \eqref{eq:RFpartial} and \eqref{eq:CT2}) yield, as coefficients of $t^2,t^1,t^0$ in the ratio, exactly $B(n+1,m-2)$, the \eqref{eq:CT4} combination, and the \eqref{eq:CT3} combination (with $j=m-2$), matching $B(n+1,m-2)(t+n)(t+n+1)$.

\textbf{Off-diagonal targets $(\alpha,\beta)$, $\alpha<\beta$.} For Region A ($1\le\alpha<\beta\le n+1$), divide by $\mathrm{fac}=(\alpha-1)!\,S_n^{\mathrm{off}}(\alpha)$; the six contributing families of Lemma~\ref{lem:multiplicity} (plus-stay, plus-shift, plus-$dL{=}1$, new-$dL{=}0$, new-edge, new-$dL{=}1$), with the sums over source states evaluated by \eqref{eq:RFpartial} and \eqref{eq:CT2}, give coefficients of $t^2,t^1,t^0$ equal to $B(n+1,\beta-2)$, the combination $(2n+1-\alpha)B(n,\beta-3)+(2n-\alpha+1)B(n+1,\beta-1)+\alpha\,B(n+1,\beta-2)=(2n+1)B(n+1,\beta-2)$ (using \eqref{eq:CT1} to substitute $B(n,\beta-3)=B(n+1,\beta-2)-B(n+1,\beta-1)$), and the \eqref{eq:CT3} combination (with $j=\beta-2$), matching $B(n+1,\beta-2)(t+n)(t+n+1)$. For Region B ($\beta=n+2$, $1\le\alpha\le n$) the only sources involve $b=n+1$, and the four contributions sum to $t^2+(2n+1)t+n(n+1)=(t+n)(t+n+1)$ with $B(n+1,n)=1$. The corner $(\alpha,\beta)=(n+1,n+2)$ receives contributions only from $(n+1,n+1)$: $n!\,t\up{n}\cdot(t\cdot n+t^2)=n!\,t\cdot t\up{n+1}=W_{n+1}(n+1,n+2)$.

This verifies the closed form at width $n{+}1$ for every target cell, completing the induction.
\end{proof}

\begin{remark}\label{rem:induction-status}
The inductive step above is a closed algebraic argument \emph{given} the transition multiplicities of Lemma~\ref{lem:multiplicity}, which are proved in full generality there by a gap-coordinate enumeration (and cross-checked exhaustively through $n=8$ in Section~\ref{sec:verification}). The support of the growth state (Lemma~\ref{lem:support}) is likewise established by closed argument: $\alpha\le\beta$ and $\beta\ge2$ directly, and $\beta\le n+1$ inductively from the target ranges of the multiplicity table, as carried along in the proof above.
\end{remark}

\section{Deducing Conjecture \ref{conj:kahane} for $\R_{2,n}$}\label{sec:conjecture}

\begin{proof}[Proof of Theorem~\ref{thm:main}]
Sum the closed form of Theorem~\ref{thm:closedform} over all admissible $(\alpha,\beta)$. For each fixed $\beta$, the $\alpha$-sum telescopes by \eqref{eq:RFpartial}:
\[
  \sum_{\alpha=1}^{\beta}(\alpha-1)!\,S_n(\alpha,\beta;t)=t\up{n}\,(t+1)\up{n}
\]
(the case $\alpha=1$ of \eqref{eq:RFpartial}). Hence
\[
  \sum_{\alpha,\beta}W_n(\alpha,\beta;t)=\sum_{\beta=2}^{n+1}B(n,\beta-2)\cdot t\up{n}(t+1)\up{n}=t\up{n}(t+1)\up{n}\sum_{k=0}^{n-1}B(n,k)=C_n\,t\up{n}(t+1)\up{n},
\]
using $\sum_{k=0}^{n-1}B(n,k)=C_n$ (Section~\ref{sec:closedform}). Thus
\begin{equation}\label{eq:sumW}
  \sum_{\sigma\in S_{2n}}t^{\bl(\sigma)}=C_n\,t\up{n}(t+1)\up{n}.
\end{equation}

It remains to identify the right side with $(2n)!\,\Omega(\R_{2,n};t)$. Since $\R_{2,n}$ is the product of a $2$-chain and an $n$-chain, MacMahon's product formula for the order polynomial of a product of two chains gives
\[
  \Omega(\R_{2,n};t)=\prod_{i=1}^{2}\prod_{j=1}^{n}\frac{t+i+j-2}{i+j-1}
  =\frac{t\up{n}}{n!}\cdot\frac{(t+1)\up{n}}{(n+1)!}
  =\frac{t\up{n}(t+1)\up{n}}{n!\,(n+1)!}.
\]
(For a product of chains, order-preserving maps to $[t]$ are in bijection with multichains of $t-1$ order ideals, i.e.\ plane partitions in a $2\times n$ box with entries $\le t-1$, counted by the displayed product.) Multiplying by $(2n)!$ and using $C_n=(2n)!/(n!(n+1)!)$,
\[
  (2n)!\,\Omega(\R_{2,n};t)=\frac{(2n)!}{n!(n+1)!}\,t\up{n}(t+1)\up{n}=C_n\,t\up{n}(t+1)\up{n},
\]
which equals \eqref{eq:sumW}. This is Conjecture~\ref{conj:kahane} for $\R_{2,n}$.
\end{proof}

\section{Computational verification}\label{sec:verification}

We did not formally verify this result in a proof assistant. Every claim is either a complete algebraic proof (Sections~\ref{ssec:catalan}--\ref{ssec:induction}, the structural Lemmas~\ref{lem:lastblock}--\ref{lem:xminus}, and the MacMahon identification of Section~\ref{sec:conjecture}) or a finite computation we describe precisely here. The author independently re-derived or re-ran every computation below, from scratch, before trusting it.

\subsection{The block statistic and the closed form, by exhaustive enumeration}

We implemented $\bl$ directly from Definition~\ref{def:blockstat} (the fence step plus the four-case insertion, including the reconstructed fourth case of Remark~\ref{rem:reconstructed}). For $n=2,3,4,5$ we enumerated all $(2n)!$ labelings of $\R_{2,n}$, extracted each labeling's growth state $(\alpha,\beta)$ and block count $\bl$, and built $W_n(\alpha,\beta;t)$ exactly. The result matched the closed form of Theorem~\ref{thm:closedform} on every one of the $5+9+14+20=48$ admissible cells, as polynomials in $t$, with zero mismatches. As an independent cross-check (not relying on the case split), the leading coefficient of $\sum_\sigma t^{\bl(\sigma)}$ (the count of all-singleton labelings) matched the number of linear extensions of $\R_{2,n}$ for every $n$ tested --- the standard hook-length value (e.g.\ $42$ for the $3\times3$ square), confirming Kahane's Remark 4.5 and the validity of the reconstructed fourth case on rectangles.

\subsection{The inductive step, by exhaustive enumeration of the transition}

The transition-multiplicity table (Lemma~\ref{lem:multiplicity}) and the inductive step of Section~\ref{ssec:induction} are proved in full generality in Section~\ref{ssec:multiplicity}. As an independent cross-check --- \emph{not} relying on the gap-counting --- we recomputed $W_{n+1}$ from the closed form at width $n$ via the recurrence \eqref{eq:recurrence} by enumerating, for every source state $(\alpha,\beta)$ and every ordered rank pair $(r_b,r_a)$, the transition $(\alpha',\beta',d)$ using our \emph{own} implementation of Lemma~\ref{lem:transition} (built from Definition~\ref{def:blockstat}, not from the multiplicity table), and confirmed the result equals the closed form at width $n{+}1$ on every target cell, with zero spurious states produced, for $n=2,3,\dots,9$ (target cells $9,14,20,27,35,44,54,65$ respectively). Separately, we verified the multiplicity table itself row-by-row --- each family's count, and the partition of the $(2n+2)(2n+1)$ rank pairs --- against the same enumeration for all $154$ source states at $n=2,\dots,8$, with zero mismatches.

\subsection{A transition-independent Monte Carlo check beyond the brute-force ceiling}

As a check that does not use the column transition at all, we sampled $300{,}000$ random labelings of $\R_{2,n}$ for $n=6$ and $n=7$ (beyond the $(2n)!$ brute-force ceiling), computed $(\alpha,\beta,\bl)$ for each via the same from-scratch $\bl$ implementation, and compared the empirical joint distribution to the closed form's prediction. Binned by expected sample count, the worst relative deviation tracks the $1/\sqrt{\text{expected count}}$ sampling-noise floor exactly in every bin (cells with $\ge1000$ expected samples match to within $4$--$5\%$; rarer bins deviate precisely as $1/\sqrt{N}$), the signature of pure sampling noise with no systematic bias. The closed form matches the sampled distribution at both $n=6$ and $n=7$.

\subsection{The Catalan and rising-factorial identities}

Identities \eqref{eq:CT1}, \eqref{eq:CT3}, and \eqref{eq:CT4} of Lemma~\ref{lem:CT} were verified by computer-algebra simplification to $0$ at symbolic $(n,j)$, and \eqref{eq:CT2} numerically for all $n\le14$ and admissible $j$ (algebraically it telescopes from \eqref{eq:CT1}), in addition to the algebraic proofs of Section~\ref{ssec:catalan}. The rising-factorial identity of Lemma~\ref{lem:RF} is the $\alpha$-sum telescope used throughout.

\subsection{Honest accounting}

Every claim is a complete general-$n$ argument: the transition-multiplicity table is proved by gap-coordinate enumeration in Section~\ref{ssec:multiplicity}, the Catalan-triangle and rising-factorial identities in Sections~\ref{ssec:catalan}--\ref{ssec:rf}, the diagonal and off-diagonal inductive steps in Section~\ref{ssec:induction}, the support of the growth state directly and inductively in Lemma~\ref{lem:support}, and the $\alpha$-sum telescope, Catalan $\beta$-sum, and MacMahon identification in Section~\ref{sec:conjecture}. The exhaustive cross-checks of Section~\ref{sec:verification} certify the multiplicity table and the inductive step independently of the written proofs. As a further check that the inductive step's assembly is not merely an identity verified at finitely many $n$, the fully reduced diagonal and off-diagonal target identities (the four-term and six-term combinations of Section~\ref{ssec:induction}, with the $\alpha$- and $\beta$-sums closed by Lemmas~\ref{lem:RF} and~\ref{lem:CT}) were verified by computer algebra at \emph{symbolic} $n$, as rational-function identities in $(n,\alpha,\beta,t)$. The reconstructed fourth case of Definition~\ref{def:blockstat} matches the worked example of \cite[Figure 12]{Kahane} (Remark~\ref{rem:reconstructed}) and is validated on every rectangle tested, but has not been confirmed with the author.

\section{Note on methodology and attribution}\label{sec:methodology}

This result was produced by the MathDyad research process: a human researcher (the author) working with large language models used as reasoning engines, in the same spirit as \cite{Boxcover}. The author's role was architectural --- selecting the target, directing successive rounds of investigation, and independently verifying every claim before it was accepted, by hand, by an independent from-scratch implementation, or both.

The target arose from an independent literature-scouting pass (Grok, xAI, model \texttt{grok-4.5}, with web access) over recent open problems in enumerative combinatorics, which surfaced Kahane's Conjecture 4.4 \cite{Kahane} --- posted six days before this work began --- as a candidate with a clean two-sided verifier (exact enumeration of $\bl$ on small rectangles). Reading the primary source directly (not a summary) immediately surfaced two issues recorded honestly above: the false uniqueness claim of Proposition 4.1(1) (Remark~\ref{rem:prop41}) and the duplicated case in Lemma 4.3's proof (Remark~\ref{rem:reconstructed}); the first was routed around, the second reconstructed and validated only indirectly.

The proof was developed over five cold (web-disabled) rounds with Grok. Rounds 1--3 verified the conjecture computationally on small rectangles, reduced the $2\times n$ case to a single identity, and progressively narrowed it; round 4 found the closed form for $W_n$ and proved everything downstream of it conditional on that form (verified on 48 cells); round 5 proved the closed form for all $n$ by the column-transition induction of Section~\ref{sec:proof}. Every claim returned by Grok was independently re-derived or re-verified by the author's own from-scratch code before being incorporated; the transition-multiplicity table in particular was certified exhaustively through $n=8$, and its aggregate effect on the recurrence through $n=9$, by an implementation of the transition built independently of Grok's (Section~\ref{sec:verification}).

Claude (Anthropic) served throughout in the framing and verification role: posing each round's scoped target, building the independent enumeration and verification scripts cited in Section~\ref{sec:verification}, and declining to accept any claim until independently checked. We follow the emerging convention of \cite{Boxcover} in crediting this assistance in the text rather than as authorship.

\section*{Acknowledgements}
The author thanks Yakob Kahane for posing Conjecture 4.4 in a form --- a constructively-defined statistic with a clean finite verifier --- that invited exactly this kind of resolution, and for the fence-poset framework it extends.

\end{document}